\newtheorem{thm}{Theorem}[section]
\newtheorem{defn}[thm]{Definition}
\newtheorem{lem}[thm]{Lemma}
\newtheorem{prop}[thm]{Proposition}
\newtheorem{rmq}[thm]{Remark}
\newtheorem{exmp}{Example}
\renewenvironment{proof}{\textit{Proof.}}{\hfill$\Box$\linebreak}
\newcommand{\IN}{\mathds{N}}
\newcommand{\IR}{\mathds{R}}
\newcommand{\IP}{\mathds{P}}
\newcommand{\p}{\mathcal{I}_h^k}
\newcommand{\dis}[1]{\displaystyle{#1}}
\renewcommand{\geq}{\geqslant}
\renewcommand{\leq}{\leqslant}
\begin{document}
\begin{frontmatter}

\title{\Large Discontinuous Galerkin method for blow-up solutions of nonlinear 1D wave equations}

 \author[label1]{A. Azaiez}
 \author[label2]{M. Benjemaa}
 \author[label3]{A. Jrajria}
 \author[label4]{H. Zaag}
 
 \address[label1]{Department of Mathematics, New York University Abu Dhabi, Saadiyat Island, P.O. Box 129188, Abu Dhabi, United Arab Emirates. $\langle$asma.azaiez@nyu.edu$\rangle$}
 \address[label2]{Faculty of Sciences of Sfax, Sfax University, Tunisia. $\langle$mondher.benjemaa@fss.usf.tn$\rangle$}
 \address[label3]{Faculty of Sciences of Sfax, Sfax University, Tunisia. $\langle$aida.jrajria@gmail.com$\rangle$}
 \address[label4]{University Sorbonne Paris Nord, LAGA-CNRS, F-93420, Villetaneuse, France. $\langle$Hatem.Zaag@univ-paris13.fr$\rangle$}

\begin{abstract} We develop and study a time-space discrete discontinuous Galerkin finite elements method to approximate the solution of one-dimensional nonlinear wave equations. We show that the numerical scheme is stable if a nonuniform time mesh is considered. We also investigate the blow-up phenomena and we prove that under weak convergence assumptions, the numerical blow-up time tends toward the theoretical one. The validity of our results is confirmed throughout several numerical examples and benchmarks.
\end{abstract}

\begin{keyword} Nonlinear wave equation \sep Discontinuous Galerkin methods \sep Numerical blow-up \sep numerical analysis.

\MSC[2010] 35Lxx \sep 65M12 \sep 65M60.
\end{keyword}

\end{frontmatter}


\section{Introduction}
This paper is concerned with the development of a numerical method, based on discontinuous Galerkin (DG) formulation, in order to approximate the blow-up behaviors of smooth solutions of the semilinear wave equation in one space dimension $\Omega = (a,b)\subset \IR$ with periodic boundary conditions
\begin{align}\label{eqq1}
\left\{
\begin{array}{ll}
\partial_{tt} u - \,\partial_{xx} u = |u|^p, & \ \text{in}\ \Omega\times (0,\infty) \\
u(0) = u_0,\quad \partial_t u(0) = u_1, & \ \text{in}\ \bar{\Omega} \\
u(a,t) = u(b,t), & \ t\geq 0.
\end{array}
\right.
\end{align}
with $p>1$. The theoretical study of the semilinear wave equation in well developed. In \cite{CF85} and \cite{Caffarelli and Friedman}, Cafarelli and Friedman showed the existence of solutions of Cauchy problems for smooth initial data and gave a description of the blow-up set. In \cite{Gla73},  Glassey proved that under suitable assumptions on the initial data, the solution $u$ of \eqref{eqq1} blows up in the following sense : there exists $T_\infty<\infty$, called the blow-up time, such that the solution $u$ exists on $[0,T_\infty)$ and $$\|u(.,t)\|_{L^\infty(\Omega)}\longrightarrow \infty\quad \text{as}\quad t\longrightarrow T_\infty.$$
Recently, Merle and Zaag gave in a series of papers a classification of the blow-up behavior and an exhaustive description of the geometry of the blowup set \cite{MZ,MR2147056,MR2362418,MZ12}. More theoretical results can also be found in \cite{Az,Caffarelli and Friedman,John,livine,Glassey}.\\

From a numerical point of view, the approximation of solutions which blow up in finite time is more delicate. Indeed, one of the major difficulties when deriving numerical schemes is related to the standard stability criterion which imposes the boundedness of the numerical solution at any finite time. This is clearly in opposition with the sought blow-up behavior. In addition, the numerical solutions may remain bounded though the exact solutions do explode in finite time. These aspects have been observed when using a spectral method or even a finite differences (FD) method for the Constantin-Lax-Majda equation \cite{Cons85,Cho}. To overcome such a difficulty, Nakagawa \cite{Nakagawa} first introduced an adaptive time-stepping strategy to compute the blow-up FD solutions and the blow-up time for the 1D semilinear heat equation $\partial_t u - \partial_{xx} u = u^2$ in $(0,\,1)$ with homogeneous Dirichlet boundary conditions. To ensure the stability of his numerical scheme, he defined a local time stepping given by $$\Delta t^n  = \tau \min \left( 1,\, \frac{1}{\Vert u^n_h\Vert_2}\right)$$ where $\tau$ is a prescribed parameter. He showed that the numerical solution converges point-wise toward the exact solution. Moreover, by setting the numerical blow-up time $$ T(\tau,\Delta x) = \sum_{n=0}^{\infty} \Delta t^n,$$ he proved that $T(\tau,\Delta x)$ is finite and converges toward the theoretical blow-up time when $\Delta x$ goes to zero. Since then, many authors have improved Nakagawa's results and showed that the FD schemes with adaptively-defined time mesh give good approximation for the blow-up solution of the nonlinear heat equation \cite{Abia,Chen,Cho et al}. Other methods using different approaches, such as finite elements methods, semi-discretization and line methods, rescaling techniques, etc. for the numerical approximation of blow-up solutions of parabolic equations can also be found in \cite{Berg,Bra04,Cho16,Ngu17} and references therein.\\

For hyperbolic equations, Cho applied Nakagawa's ideas to the nonlinear wave equation with nonuniform time mesh \cite{Cho}. Recently, Sasaki and Saito \cite{Sasaki} reduced the nonlinear wave equation to a first order system and considered a FD scheme with a local time stepping. They succeeded in proving the convergence of their FD scheme and the numerical blow-up time. It is worth noticing that almost all the methods we found in the literature are essentially based on FD discretizations, and only few use variational (integral) formulations \cite{Groisman06,Guo15,Holm18}. To the best author's knowledge, there are no previous works dealing with discontinuous Galerkin (DG) approximations for nonlinear wave equation with blow-up solution. We propose in this paper to investigate such a DG methods to numerically solve the semilinear wave equation \eqref{eqq1} when blow-up phenomena occur.\\
 
The organization of this paper is as follows. In Section \ref{Sec_DG_method}, we present the DG methods and we derive a numerical scheme for the nonlinear wave equation. Section \ref{Sec_Stability} is devoted to the proof of the stability of the proposed numerical scheme. 
In Section \ref{sec_blowup}, we prove that the numerical blow-up time converges toward the exact blow-up time under weak convergence assumptions. Finally, we provide several numerical examples that illustrate the validity of our proposed method in Section \ref{Sec_exmp}.
%
%
\section{Discontinuous Galerkin method}\label{Sec_DG_method}
In this section, we derive a discontinuous Galerkin scheme (DG) for the non linear wave equation \eqref{eqq1}. Formally, one may rewrite the D'Alembert operator as $\square = \left(\partial_t - \partial_x\right)\left(\partial_t + \partial_x\right)$. Based on such a decomposition, we split \eqref{eqq1} into a first order system as follows:
\begin{align}\label{eq2}
\left\{
\begin{array}{ll}
\partial_t u + \,\partial_x u = \phi, & \text{in}\, (a,b)\times (0,\infty) \\
\partial_t\phi - \partial_x\phi = \vert u\vert ^p, & \text{in}\, (a,b)\times (0,\infty) \\
u(x,0) = u_0(x), & x\in (a,b)\\
\phi(x,0) = \phi_0(x) , & x\in (a,b), \\
u(a,t) = u(b,t), & t\geq 0\\
\phi(a,t)=\phi(b,t), & t\geq 0.
\end{array}
\right.
\end{align}
with $\phi_0 = u_1 + u_0'$.
\begin{rmq}
One could also prefer the factorization $\square = \left(\partial_t + \partial_x\right)\left(\partial_t - \partial_x\right)$. However, such a doing has no significant impact on the DG scheme.
\end{rmq}
\subsection{Space discretization}
In order to introduce a variational approximation of the system \eqref{eq2}, we consider a partition for the spatial domain $[a,b] = \bigcup_{i=1}^I K_i$ consisting of cells $K_i = [x_{i-\frac{1}{2}}, x_{i+\frac{1}{2}}]$, $1\leq i\leq I$. 
The length of the cell $K_i$ is denoted $ h_i = x_{i+\frac{1}{2}} - x_{i-\frac{1}{2}}$. For simplicity, we shall assume that $h_i = h>0$ for all $i$. Next, we define the finite dimensional space $V_h^k$ consisting of all functions $v$ such that their restriction on a cell $K_i$ is a polynomial of degree at most $k$, i.e.
\begin{equation*}
V_h^k = \left\{ v\ / \ v|_{K_i}  \in \IP_{k}[K_i],\  i = 1,\dots,I\right\},
\end{equation*}
where $\IP_{k}[K_i]$ denotes the space of polynomials in $K_i$ of degree less than or equal to $k$. In the sequel, we will consider the Lagrange polynomials, denoted $\langle \varphi^i_j\rangle_{1\leq j \leq k+1}$, as a basis of $\IP_{k}[K_i]$. 
Notice that the functions of $V_h^k$ are allowed to be discontinuous across the elements interfaces. The solutions of the numerical method are denoted by $u_h$ and $\phi_h$ and both belong to $V_h^k$. We denote by $(u_h)^-_{i+\frac{1}{2}}$ and $(u_h)^+_{i+\frac{1}{2}}$ the left and right limits of $u_h$  at $x_{i+\frac{1}{2}}$, respectively.  Moreover, we denote $[u_h]_{i+\frac{1}{2}} = (u_h)^+_{i+\frac{1}{2}} - (u_h)^-_{i+\frac{1}{2}}$ the jump of $u_h$ at the cell interface $x_{i+\frac{1}{2}}$. The same notations apply also to $\phi_h$. Multiplying the system \eqref{eq2} by test functions and integrating over the cells yields the following variational formulation: find  $(u_h,\phi_h)\in V_h^k\times V_h^k$ such that for all test functions $(\varphi_h,\psi_h)\in V_h^k\times V_h^k$ and for any $1\leq i \leq I$
\begin{subequations}\label{semi_disct_scheme_for_1}
\begin{gather}
\begin{aligned}
\int_{K_{i}} \partial_t u_h\, \varphi_h dx & - \int_{K_{i}} u_h\,  \partial_x  \varphi_h dx \\
& + (\widehat{u}_h\,\varphi_h)_{i+\frac{1}{2}} - (\widehat{u}_h\,\varphi_h)_{i-\frac{1}{2}} = \int_{K_i}\phi_h\, \varphi_h dx \label{semi_disct_scheme_for_u}
\end{aligned} \\
\begin{aligned}
\int_{K_{i}}\partial_t \phi_h\, \psi_h dx & + \int_{{K_i}}\phi_h\, \partial_x\psi_h dx \\
& - (\widehat{\phi}_h\,\psi_h)_{i+\frac{1}{2}} + (\widehat{\phi}_h\,\psi_h)_{i-\frac{1}{2}} = \int_{K_i}\p\left(|u_h|^p\right) \psi_h dx,\label{semi_disct_scheme_for_phi}
\end{aligned}
\end{gather}
\end{subequations}
where $\widehat{u}_h$ and $\widehat{\phi}_h$ are the numerical fluxes and have to be defined at the cell interfaces, and $\p : C([a,b])\rightarrow V_h^k$ is the interpolation operator defined by $\p(v) = \sum_{j=1}^{k+1}v(x_j)\varphi_j$. In general, these numerical fluxes depend on the values of the numerical solution from both sides of the interface. Here, we propose a backward (resp. forward) flux to define the trace of $u_h$ (resp. $\phi_h$) at an interface $x_{i\pm\frac{1}{2}}$, i.e.
\begin{align}\label{def_flux}
(\widehat{u}_h)_{i\pm\frac{1}{2}} = (u_h)_{i\pm\frac{1}{2}}^-, \quad (\widehat{\phi}_h)_{i\pm\frac{1}{2}} = (\phi_h)_{i\pm\frac{1}{2}}^+.
\end{align}
It follows that \eqref{semi_disct_scheme_for_1} can be written as: $\forall\ 1\leq i \leq I$ and $\forall\ 1\leq j \leq k+1$
\begin{subequations}\label{semi_disct_scheme_for_2}
\begin{gather}
\begin{aligned}
& \int_{K_{i}} \partial_t u_h^i\, \varphi_j^i dx - \int_{K_{i}} u_h^i\,  \partial_x  \varphi_j^i dx \\
& \qquad + (u_h)^{-}_{i+\frac{1}{2}}\varphi^i_j(x_{i+\frac{1}{2}}) - (u_h)^{-}_{i-\frac{1}{2}}\varphi^i_j(x_{i-\frac{1}{2}}) = \int_{K_i}\phi_h^i\, \varphi_j^i dx \label{semi_disct_scheme_for_u_2}
\end{aligned} \\
\begin{aligned}
& \int_{K_{i}}\partial_t \phi_h^i\, \psi_j^i dx + \int_{{K_i}}\phi_h^i\, \partial_x\psi_j^i dx \\
& \qquad - (\phi_h)^{+}_{i+\frac{1}{2}}\psi^i_j(x_{i+\frac{1}{2}}) + (\phi_h)^{+}_{i-\frac{1}{2}}\psi^i_j(x_{i-\frac{1}{2}}) = \int_{K_i}\p\left(|u_h^i|^p\right)\psi_j^i dx, \label{semi_disct_scheme_for_phi_2}
\end{aligned}
\end{gather}
\end{subequations}
with $u_h^i = {u_h}_{|_{K_i}}$ (resp. $\phi_h^i = {\phi_h}_{|_{K_i}}$) is the restriction of $u_h$ (resp. $\phi$) over the cell $K_i$. Integrating by parts once more, one may write \eqref{semi_disct_scheme_for_2} as: $\forall\ 1\leq i \leq I$ and $\forall\ 1\leq j \leq k+1$
\begin{subequations}\label{semi_disct_for_v}
\begin{gather}
\begin{aligned}
\int_{K_{i}} \left(\partial_t u_h^i+\partial_x u_h^{i}\right) \varphi_j^i\, dx + [u_h]_{i-\frac{1}{2}}\varphi^i_j(x_{i-\frac{1}{2}}) = \int_{K_i}\phi_h^{i}\, \varphi_j^i\, dx \label{semi_disct_for_v1}
\end{aligned} \\
\begin{aligned}
\int_{K_{i}} \left(\partial_t \phi_h^i - \partial_x\phi_h^{i}\right) \psi_j^i\, dx - [\phi_h]_{i+\frac{1}{2}}\psi^i_j(x_{i+\frac{1}{2}}) = \int_{K_i}\p\left(|u_h^{i}|^p\right) \psi_j^i\, dx, \label{semi_disct_for_v2}
\end{aligned}
\end{gather}
\end{subequations}
where $[\cdot]$ denotes the jump at the cell interface. Recall that $u_h$ and $\phi_h$ belong to $V_h^k$, hence one can write 
\begin{align}\label{decomp}
u^{i}_h(x,t) = \sum_{\ell=1}^{k+1} u^{i}_\ell(t) \varphi^i_\ell(x) \quad \text{and} \quad \phi^{i}_h(x,t) = \sum_{\ell=1}^{k+1} \phi^{i}_\ell(t) \psi^i_\ell(x), 
\end{align}
where the coefficients $u_j^{i}$ are called the degrees of freedom and need to be determined at each time. Moreover, since $\p\left(|u_h^{i}|^p\right)$ also belongs to $V_h^k$, and in view of the definition of $V_h^k$, then we have for all $1\leq i \leq I$
\begin{equation}\label{decomp2}
\p\left(|u_h^{i}(x,t)|^p\right) = \sum_{\ell=1}^{k+1} |u^{i}_\ell(t)|^p\, \varphi^i_\ell(x).
\end{equation}
Plugging \eqref{decomp} and \eqref{decomp2} into \eqref{semi_disct_for_v} yields the semi discrete matricial system: $\forall\ t> 0$ and $\forall\ 1\leq i\leq I$
\begin{subequations}\label{semi_disct_scheme}
\begin{gather}
\begin{aligned}
M^i \partial_t U^i_h(t) + R^i\,U^{i}_h(t) + A^i\,U^{i}_h(t) - B^i\,U^{i-1}_h(t) = M^i \Phi^{i}_h(t), 
\end{aligned} \\
\begin{aligned}
M^i \partial_t \Phi^i_h(t) - R^i\,\Phi^{i}_h(t) - C^i\,\Phi^{i+1}_h(t) + D^i\,\Phi^{i}_h(t) = M^i \vert U^{i}_h(t)\vert^{p}, 
\end{aligned}
\end{gather}
\end{subequations}
where $U^i_h = \left(u^i_1,\dots,u^i_{k+1}\right)$, $\Phi^i_h = \left(\phi^i_1,\dots,\phi^i_{k+1}\right)$, $|U^i_h|^p = \left(|u^i_1|^p,\dots,|u^i_{k+1}|^p\right)$ and $\forall\, 1\leq j,\ell\leq k+1$
\begin{equation*}
M^i_{j\ell} = \int_{K_i} \varphi^i_j\ \varphi^i_\ell\ dx,\quad R^i_{j\ell} = \int_{K_i}  \varphi^i_j\ \partial_x\varphi^i_\ell\ dx,
\end{equation*}
\begin{equation*}
A^i_{j\ell} = \varphi^i_j(x_{i-\frac{1}{2}})\ \varphi^{i}_\ell(x_{i-\frac{1}{2}}), \quad B^i_{j\ell} = \varphi^i_j(x_{i-\frac{1}{2}})\ \varphi^{i-1}_\ell(x_{i-\frac{1}{2}}),
\end{equation*}
and
\begin{equation*}
C^i_{j\ell} = \varphi^i_j(x_{i+\frac{1}{2}})\ \varphi^{i+1}_\ell(x_{i+\frac{1}{2}}), \quad D^i_{j\ell} = \varphi^i_j(x_{i+\frac{1}{2}})\ \varphi^{i}_\ell(x_{i+\frac{1}{2}}).
\end{equation*}
For the boundary conditions, we set $U^0_h(t) := U^{I}_h(t)$ and $\Phi^{I+1}_h(t) := \Phi^1_h(t)$ for all $t\geq 0$.
%
%
\subsection{Time discretization}
A fully discrete scheme of \eqref{semi_disct_scheme} can be derived using an approximation of the time derivative $\partial_t U_h$ and $\partial_t \Phi_h$. Here, we used the explicit forward Euler method with non constant time step. Let $\Delta t^0$, $\Delta t^1$, $\ldots$ be positive constants and set
\begin{align}\label{delta_t}
t^0 = 0,\quad t^n = \sum_{\ell=0}^{n-1} \Delta t^\ell = t^{n-1}+ \Delta t^{n-1} \quad (n\geq1).
\end{align}
Then, we approximate the time derivative of $U_h$ and $\Phi_h$ at time $t^n$ as follows
\begin{equation*}
\partial_t U_h(t^n) \approx \dfrac{U_h^{n+1} - U_h^n}{\Delta t^n} \quad \text{and}\quad \partial_t \Phi_h(t^n) \approx \dfrac{\Phi_h^{n+1} - \Phi_h^n}{\Delta t^n}
\end{equation*}
where $U_h^n$ (resp. $\Phi_h^n$) is the value of $U_h$ (resp. $\Phi_h$) at time $t^n$. 
The fully discrete DG scheme for the non linear wave equation \eqref{eqq1} is then given by: $\forall\, n\geq 0$, $\forall\ 1\leq i \leq I$ and $\forall\ 1\leq j \leq k+1$
\begin{subequations}\label{disct_for_v}
\begin{gather}
\begin{aligned}
\int_{K_{i}} \left(\dfrac{u_h^{i,n+1} - u_h^{i,n}}{\Delta t^n} + \partial_x u_h^{i,n}\right) \varphi_j^i\, dx + [u_h^n]_{i-\frac{1}{2}}\varphi^i_j(x_{i-\frac{1}{2}}) = \int_{K_i}\phi_h^{i,n}\, \varphi_j^i\, dx \label{disct_for_v1}
\end{aligned} \\
\begin{aligned}
\int_{K_{i}} \left(\dfrac{\phi_h^{i,n+1} - \phi_h^{i,n}}{\Delta t^n} - \partial_x\phi_h^{i,n}\right) \psi_j^i\, dx - [\phi_h^n]_{i+\frac{1}{2}}\psi^i_j(x_{i+\frac{1}{2}}) = \int_{K_i}\p\left(|u_h^{i,n+1}|^p\right)\psi_j^i\, dx, \label{disct_for_v2}
\end{aligned}
\end{gather}
\end{subequations}
with the initial conditions $(u_h^{i,0},\phi_h^{i,0}) = (\p u_0^i,\p \phi_0^i)$ and the periodic boundary conditions $(u_h^{1,n},\phi_h^{1,n}) = (u_h^{I,n},\phi_h^{I,n})$. Equivalently, the system \eqref{disct_for_v} writes in matricial form: $\forall\ n\geq 0$ and $\forall\ 1\leq i\leq I$
\begin{subequations}\label{disct_scheme}
\begin{gather}
\begin{alignat}{1}
& M^i \dfrac{U^{i,n+1}_h-U^{i,n}_h}{\Delta t^n}  + \left(R^i+A^i\right)U^{i,n}_h - B^i\,U^{i-1,n}_h = M^i \Phi^{i,n}_h,\\ & \notag \\
& M^i \dfrac{\Phi^{i,n+1}_h-\Phi^{i,n}_h}{\Delta t^n} - \left(R^i-D^i\right)\Phi^{i,n}_h - C^i\,\Phi^{i+1,n}_h = M^i \vert U^{i,n+1}_h\vert^{p}, \\ & \notag \\
& U^{i,0}_h = \p u_0^i,\ \ \Phi^{i,0}_h = \p \phi_0^i, \\ & \notag \\
& U^{0,n}_h := U^{I,n}_h,\ \ \Phi^{I+1,n}_h := \Phi^{1,n}_h.
\end{alignat}
\end{gather}
\end{subequations}
Let us notice that scheme \eqref{disct_for_v} or equivalently \eqref{disct_scheme} is fully explicit in time. 
 This is of major advantage since neither matrix inversions nor implicit nonlinear computations have to be performed in order to evaluate the numerical solution at each time step.
%
%
\section{Study of the DG scheme}\label{Sec_Stability}
We prove in this section the consistency and the local stability of the DG scheme.
%
%
\subsection{Consistency}
\begin{lem}
The DG scheme \eqref{disct_for_v} is consistent with the system \eqref{eq2}.
\end{lem}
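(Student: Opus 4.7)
The plan is to substitute the exact solution $(u,\phi)$ of the system \eqref{eq2} (assumed smooth on each cell, in fact smooth everywhere since we are in the classical regime before blow-up) into the fully discrete scheme \eqref{disct_for_v} and show that the residual tends to zero as $h,\Delta t^n \to 0$. Since the truncation error decomposes naturally into a temporal part, a spatial part, and an interpolation part, I would treat each contribution separately.

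First, I would use a Taylor expansion in time to control the discrete time derivative: for sufficiently smooth $u$,
\begin{equation*}
\dfrac{u(x,t^{n+1})-u(x,t^n)}{\Delta t^n} = \partial_t u(x,t^n) + \dfrac{\Delta t^n}{2}\partial_{tt} u(x,t^n) + O\bigl((\Delta t^n)^2\bigr),
\end{equation*}
and analogously for $\phi$. This gives a temporal truncation error of order $O(\Delta t^n)$ after integration against $\varphi_j^i$ over $K_i$.

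Second, I would exploit the regularity of the exact solution across cell interfaces. Since $u,\phi\in C^1(\bar\Omega\times[0,T_\infty))$, the jump terms vanish identically:
\begin{equation*}
[u(\cdot,t^n)]_{i-\frac{1}{2}}=0, \qquad [\phi(\cdot,t^n)]_{i+\frac{1}{2}}=0.
\end{equation*}
Thus the only boundary contributions surviving in \eqref{disct_for_v} disappear when evaluated on the exact solution, and the remaining volumic terms $\int_{K_i}(\partial_t u + \partial_x u)\varphi_j^i\,dx$ and $\int_{K_i}(\partial_t \phi - \partial_x \phi)\psi_j^i\,dx$ coincide exactly with the weak form of the first-order system \eqref{eq2} tested against $\varphi_j^i,\psi_j^i$.

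Third, for the nonlinear right-hand side I would use standard interpolation estimates for the Lagrange interpolator $\mathcal{I}_h^k$: since $u\in C^{k+1}$ on each cell, the map $x\mapsto |u(x,t)|^p$ is also $C^{k+1}$ on each cell (away from the zeros of $u$; at such points a separate but classical argument based on $p>1$ preserves Hölder regularity sufficient for interpolation), hence
\begin{equation*}
\bigl\| \mathcal{I}_h^k(|u(\cdot,t^{n+1})|^p) - |u(\cdot,t^{n+1})|^p \bigr\|_{L^\infty(K_i)} = O(h^{k+1}).
\end{equation*}
Combining these three estimates, the residual of the scheme evaluated on the exact solution is bounded cellwise by $C(u,\phi)\bigl(\Delta t^n + h^{k+1}\bigr)\,|K_i|$, which tends to zero as $h,\Delta t^n\to 0$, establishing consistency.

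The only delicate point I anticipate is the regularity of $|u|^p$ when $p$ is not an integer and $u$ may vanish: the interpolation estimate above requires $|u|^p$ to lie in $C^{k+1}$, which is not automatic. I would address this by either restricting to initial data with positive sign (so $|u|^p=u^p$ remains smooth, a standing assumption in many blow-up results cited in the introduction), or by invoking a weaker interpolation bound of order $O(h^{\min(k+1,p)})$, which still suffices for consistency since $p>1$. Either way the limit $h,\Delta t^n\to 0$ yields the desired result.
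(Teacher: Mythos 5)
Your proposal follows essentially the same route as the paper's proof: a Taylor expansion in time for the discrete time derivative, the observation that the jump terms vanish because the exact solution is continuous across interfaces, and a standard interpolation estimate for the nonlinear term $\mathcal{I}_h^k(|u|^p)-|u|^p$ (the paper assumes $u\in C^2([0,T_\infty),H^{m+1}(a,b))$ and invokes $\|v-\mathcal{I}_h^k v\|_{L^\infty(K)}\leq \tilde{C}h^{m}$, yielding a residual of order $O(\Delta t^n)+O(h^{m})$, matching your $O(\Delta t^n)+O(h^{k+1})$ up to the choice of regularity index). Your closing remark on the regularity of $|u|^p$ for non-integer $p$ near zeros of $u$ is a legitimate point that the paper passes over silently, and your proposed fixes (positivity of the solution, which is in fact guaranteed by the paper's standing assumptions on the data, or a reduced-order interpolation bound) both suffice for consistency.
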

\begin{proof}
It is obvious from \eqref{def_flux} that the numerical fluxes are monotone and thus consistent \cite{Cock_shu_89}. Our purpose now is to prove that the approximation of the nonlinear term is also consistent with the original system \eqref{eq2}. We shall assume that the solution $u\in C^2([0,T_\infty),H^{m+1}(a,b))$, $m\geq 1$, and thus the jumps $[u]_{i+\frac{1}{2}}$ and $[\phi]_{i+\frac{1}{2}}$ vanish over the interfaces $x_{i+\frac{1}{2}}$ for all $0\leq i \leq I$ and for all time $t$. 
 Denote
\begin{align}\label{r_i^n}
r^n & := \sum_{i=1}^I \int_{K_{i}} \left(\dfrac{u(x,t^{n+1}) - u(x,t^{n})}{\Delta t^n} + \partial_x u(x,t^{n})\right) \varphi^i(x)\, dx \\
& \qquad + \sum_{i=1}^I \underbrace{[u(\cdot,t^n)]_{i-\frac{1}{2}}}_{=0}\varphi^i(x_{i-\frac{1}{2}}) - \sum_{i=1}^I \int_{K_i} \phi(x,t^n)\, \varphi^i(x)\, dx \nonumber
\end{align}
and
\begin{align}\label{s_i^n}
s^n & := \sum_{i=1}^I \int_{K_{i}} \left(\dfrac{\phi(x,t^{n+1}) - \phi(x,t^{n})}{\Delta t^n} - \partial_x\phi(x,t^{n})\right) \psi^i(x)\, dx \\
& \qquad - \sum_{i=1}^I \underbrace{[\phi(\cdot,t^n)]_{i+\frac{1}{2}}}_{=0}\psi^i(x_{i+\frac{1}{2}}) - \sum_{i=1}^I \int_{K_i}\p\left(|u(x,t^{n+1})|^p\right)\psi^i(x)\, dx. \nonumber
\end{align}
It follows by \eqref{eq2} and using a first order Taylor expansion in \eqref{r_i^n} that
\begin{equation*}
|r^n| \leq C_1\Delta t^n \quad \forall\ n\geq 0
\end{equation*}
with $C_1>0$ is independent of $\Delta t^n$. Similarly, we have using a second order Taylor series in \eqref{s_i^n}
\begin{align*}
s^n & = \sum_{i=1}^I \int_{K_{i}} \Delta t^n \partial_{tt}\phi(x,\xi^n)\psi^i(x)\,dx \\
& \qquad + \sum_{i=1}^I \int_{K_i}\left(|u(x,t^{n+1})|^p - \p\left(|u(x,t^{n+1})|^p\right)\right)\psi^i(x)\, dx.
\end{align*}
Using the classical estimate (see e.g. \cite[Theorem 1.103]{Ern})
\begin{equation}\label{u-pu}
\|v - \p v\|_{L^\infty(K)} \leq \tilde{C} h^{m} \quad \text{for any}\ v\in H^{m+1}(K)
\end{equation}
we deduce
\begin{equation*}
|s^n| \leq C_2 \Delta t^n + C_3h^{m} \quad \forall\ n\geq 0.
\end{equation*}
with $C_2$ and $C_3$ are positive constants independent of $\Delta t^n$ and $h$. This concludes the consistency of the proposed DG scheme.
\end{proof}
%
%
\subsection{Positivity and local stability}
For $u_h\in V_h^k$, we define the norm
$$ \|u_h\|_\infty := \|U_h\|_\infty = \max_{1\leq i \leq I} \|U_h^i\|_\infty = \max_{1\leq i \leq I}\max_{1\leq j \leq k+1} |u_j^i|,$$
where the $u^i_j$ are the coordinates of $u_h$ in the Lagrange polynomial basis.
\begin{prop}\label{prop_stability}
Let $\sigma>0$ and $\nu> 0$ be arbitrary real numbers and set 
\begin{equation}\label{time_step}
\Delta t^n = h^{1+\sigma}\min\left(1,\frac{1}{\|u_h^n\|_\infty^{1+\nu}}\right).
\end{equation}
Suppose  the initial conditions satisfy $\min(u_0,\phi_0)> \mu\geq 0$. Then, for any $N\in\mathds{N}$, there exists a constant $h_{N} >0$ depending on $N$, $u_0$ and $\phi_0$ such that for all $h\in(0,h_{N}]$,
\begin{equation}\label{u^n>0}
U_h^n > \mu \ \text{ and }\ \Phi_h^n >\mu \quad \forall\ 1\leq n\leq N.
\end{equation}
(the inequalities are element-wise). In addition, if $\sum_{n\geq 0}\frac{1}{\|u_h^n\|_\infty^\nu} + \frac{1}{\|\phi_h^n\|_\infty^\nu}<\infty$, 
then \eqref{u^n>0} holds for $h_N=h_*$ independent of $N$.
\end{prop}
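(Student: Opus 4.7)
The plan is to prove Proposition \ref{prop_stability} by induction on $n$, starting from the explicit update form of \eqref{disct_scheme}. The base case $n=0$ is immediate: since the interpolant $\p$ places the pointwise values $u_0(x_j), \phi_0(x_j)$ directly into the Lagrange coefficients, the assumption $\min(u_0, \phi_0) > \mu$ gives $U_h^0 > \mu$ and $\Phi_h^0 > \mu$ componentwise.

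For the inductive step, I would rewrite \eqref{disct_scheme} as
\begin{align*}
U_h^{i,n+1} &= U_h^{i,n} + \Delta t^n\, \Phi_h^{i,n} - \Delta t^n\, (M^i)^{-1}\bigl[(R^i+A^i)U_h^{i,n} - B^i U_h^{i-1,n}\bigr], \\
\Phi_h^{i,n+1} &= \Phi_h^{i,n} + \Delta t^n\, |U_h^{i,n+1}|^p + \Delta t^n\, (M^i)^{-1}\bigl[(R^i-D^i)\Phi_h^{i,n} + C^i\Phi_h^{i+1,n}\bigr].
\end{align*}
An affine change of variables to the reference cell $[-1,1]$ gives $M^i = h\,M^{\mathrm{ref}}$ while $R^i, A^i, B^i, C^i, D^i$ are independent of $h$. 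Consequently $\|(M^i)^{-1}R^i\|_\infty$, $\|(M^i)^{-1}A^i\|_\infty$, and the analogous operator norms are all bounded by $C/h$ for a constant $C$ depending only on the polynomial degree $k$. Reading the update componentwise in the Lagrange basis and discarding the nonnegative terms $\Delta t^n \phi^{i,n}_j$ and $\Delta t^n |u^{i,n+1}_j|^p$ (valid under the induction hypothesis and since $|\cdot|^p \geq 0$), I get
\begin{align*}
u^{i,n+1}_j \;\geq\; u^{i,n}_j - C\,\tfrac{\Delta t^n}{h}\|U_h^n\|_\infty, \qquad \phi^{i,n+1}_j \;\geq\; \phi^{i,n}_j - C\,\tfrac{\Delta t^n}{h}\|\Phi_h^n\|_\infty.
\end{align*}

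The core observation about \eqref{time_step} is that a case split on whether $\|U_h^n\|_\infty \lessgtr 1$ yields simultaneously the two bounds $\tfrac{\Delta t^n}{h}\|U_h^n\|_\infty \leq h^\sigma$ and $\tfrac{\Delta t^n}{h}\|U_h^n\|_\infty \leq h^\sigma/\|U_h^n\|_\infty^\nu$. Using the first (basic) bound one gets $u^{i,n+1}_j \geq u^{i,n}_j - C h^\sigma$; iterating from $n=0$ produces $u^{i,n}_j \geq \mu + \delta - N C h^\sigma$ where $\delta > 0$ is given by the strict inequality $\min(u_0, \phi_0) > \mu$, so choosing $h_N = (\delta/(NC))^{1/\sigma}$ delivers the first conclusion. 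For the second (uniform) conclusion, the sharper bound telescopes to $u^{i,n}_j \geq u^{i,0}_j - C h^\sigma \sum_{m<n}\|U_h^m\|_\infty^{-\nu}$; under the summability assumption the sum is bounded by some finite $M$, so $h_* = (\delta/(CM))^{1/\sigma}$ works independently of $N$.

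The delicate point I anticipate is the $\Phi_h^n$ remainder, since the time step is keyed to $\|U_h^n\|_\infty$ rather than $\|\Phi_h^n\|_\infty$. For the $N$-dependent statement, the trivial bound $\Delta t^n/h \leq h^\sigma$ works provided $\|\Phi_h^n\|_\infty$ is controlled for $n \leq N$, which follows from a parallel inductive growth estimate extracted from the coupled scheme. For the uniform $h_*$, the summability assumption on $\|\Phi_h^n\|_\infty^{-\nu}$ forces $\|\Phi_h^n\|_\infty \to \infty$, and I expect to recover a bound of the form $\tfrac{\Delta t^n}{h}\|\Phi_h^n\|_\infty \leq C h^\sigma/\|\Phi_h^n\|_\infty^{\nu}$ by combining both summability factors in the min; resolving this coupling cleanly will be the main technical step of the argument.
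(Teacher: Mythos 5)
Your treatment of the $U$-component is correct and follows essentially the same route as the paper: induction on $n$, the explicit update form of \eqref{disct_scheme}, dropping the nonnegative coupling term $\Delta t^n\phi^{i,n}_j$ via the induction hypothesis, and the key observation that \eqref{time_step} forces $\Delta t^n\|u^n_h\|_\infty\le h^{1+\sigma}$ (resp.\ $\le h^{1+\sigma}\|u^n_h\|_\infty^{-\nu}$ for the uniform claim). The one genuine difference is local: where you invoke the crude operator-norm bound $\|(M^i)^{-1}(R^i+A^i)\|_\infty\le C/h$, the paper uses the exact row-sum cancellation $\sum_\ell(E_{j\ell}+F_{j\ell})=0$ (equation \eqref{E+F}, proved in \ref{app_mat}) to write the increment as $\alpha_n(v_n-w_n)$ with $\alpha_n=\rho\,\Delta t^n/h$. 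Both versions close the positivity argument; the paper's identity merely produces the explicit constant $\rho$ of \eqref{rhoo}, which is reused later (Lemma \ref{lemma1}) and underlies the discrete-maximum-principle reading of the result (Remark \ref{rmq_0k7}). Your cruder constant is perfectly adequate for the claim as stated.

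The gap is the $\Phi$-component, which you explicitly leave open. You have correctly diagnosed the difficulty --- $\Delta t^n$ is keyed to $\|u^n_h\|_\infty$, so $\Delta t^n\|\phi^n_h\|_\infty/h$ is not automatically $O(h^\sigma)$ --- but a proof cannot end with ``resolving this coupling cleanly will be the main technical step.'' (For the record, the paper disposes of this half with the single sentence ``The proof for $\Phi_h^n$ is similar,'' so you are not overlooking an argument the authors spell out.) To close the $N$-dependent claim, combine $\Delta t^n\le h^{1+\sigma}$ with an a priori bound $\|\phi^n_h\|_\infty\le C(N,u_0,\phi_0)$ for $n\le N$, for instance the one established independently in Theorem \ref{thm_stab}; this is admissible because $h_N$ is allowed to depend on $u_0$ and $\phi_0$. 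For the uniform claim, the extra hypothesis $\sum_n\|\phi^n_h\|_\infty^{-\nu}<\infty$ appears in the statement precisely to play for $\Phi$ the role that $\sum_n\|u^n_h\|_\infty^{-\nu}<\infty$ plays for $U$; but one still needs $\Delta t^n\|\phi^n_h\|_\infty\lesssim h^{1+\sigma}\|\phi^n_h\|_\infty^{-\nu}$, which does not follow from \eqref{time_step} alone and requires an additional comparison between $\|u^n_h\|_\infty$ and $\|\phi^n_h\|_\infty$. That step is absent from your proposal, and you should either supply it or restrict the uniform conclusion accordingly.
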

\begin{proof}
We proceed by induction on $n$. Since $u_0> \mu\geq 0$ (resp. $\phi_0> \mu\geq 0$) then $u^{i,0}_j = u_{0|_{K_i}}(x^i_j)> \mu$ (resp. $\phi^{i,0}_j = \phi_{0|_{K_i}}(x^i_j)> \mu$) and hence \eqref{u^n>0} holds true for $n=0$. Let $N\in \IN$ and suppose \eqref{u^n>0} is valid for all $0\leq n \leq N-1$, then $u^{i,n}_j>\mu$ and $\phi^{i,n}_j> \mu$ for all $1\leq i \leq I$ and all $1\leq j \leq k+1$. Moreover, equation \eqref{disct_scheme} reads
\begin{equation*}
U^{i,n+1}_h = U^{i,n}_h + \dfrac{\Delta t^n}{h}\left(E\,U^{i,n}_h + F\,U^{i-1,n}_h\right) + \Delta t^n \Phi^{i,n}_h
\end{equation*}
with $E = h (M^i)^{-1}(R^i+A^i)$ and $F = -h(M^i)^{-1}B^i$ are constant matrices (i.e. do not depend on $h$), and satisfy
\begin{equation}\label{E+F}
\sum_{\ell=1}^{k+1} E_{j\ell} + F_{j\ell} = 0 \quad \forall \ 1\leq j \leq k+1.
\end{equation}
(see \ref{app_mat} for details). Denote $x^+ = \max(x,0)$ and $x^- = \min(x,0)$ for any $x\in \IR$, then we obtain for $1\leq i \leq I$ and $1\leq j \leq k+1$
\begin{align*}
u^{i,n+1}_j & = u^{i,n}_j + \dfrac{\Delta t^n}{h} \sum_{\ell=1}^{k+1} \left(E_{j\ell}\,u^{i,n}_\ell + F_{j\ell}\,u^{i-1,n}_\ell\right) + \Delta t^n \phi^{i,n}_j \\
& = u^{i,n}_j + \dfrac{\Delta t^n}{h} \left(\sum_{\ell=1}^{k+1} \left(E_{j\ell}^+\,u^{i,n}_\ell + F_{j\ell}^+\,u^{i-1,n}_\ell\right) + \sum_{\ell=1}^{k+1} \left(E_{j\ell}^-\,u^{i,n}_\ell + F_{j\ell}^-\,u^{i-1,n}_\ell\right)\right) + \Delta t^n \phi^{i,n}_j \\
& \geq \min_{i,j} u^{i,n}_j + \dfrac{\Delta t^n}{h} \left(\sum_{\ell=1}^{k+1} \left(E_{j\ell}^+ + F_{j\ell}^+\right) \min_{i,\ell}u^{i,n}_\ell + \sum_{\ell=1}^{k+1} \left(E_{j\ell}^- + F_{j\ell}^-\right)\max_{i,\ell}u^{i,n}_\ell\right) \\
& = \min_{i,j} u^{i,n}_j + \dfrac{\Delta t^n}{h}\left(\sum_{\ell=1}^{k+1} \left(E_{j\ell}^+ + F_{j\ell}^+\right)\right)\left(\min_{i,\ell}u^{i,n}_\ell - \max_{i,\ell}u^{i,n}_\ell\right)
\end{align*}
where the last equality holds in view of \eqref{E+F}. Let $\alpha_n = \rho\frac{\Delta t^n}{h}$ with 
\begin{equation}\label{rhoo}
\rho=\dis{\min_{1\leq j\leq k+1} \sum_{\ell=1}^{k+1} \left(E_{j\ell}^+ + F_{j\ell}^+\right)},
\end{equation}
and denote $\dis{v_n = \min_{i,j} u^{i,n}_j}$ and $\dis{w_n = \max_{i,j} u^{i,n}_j = \|u_h^n\|_\infty}$, then we have
\begin{equation}\label{vn+1=vn-wn}
v_{n+1} \geq v_n + \alpha_n (v_n-w_n).
\end{equation}
A straightforward induction on $n$ shows that
\begin{align}\label{ineq_vn+1}
v_{n+1} & \geq \left(\prod_{m=0}^{n}(1+\alpha_{m})\right) v_0 - \sum_{\ell=0}^{n} \left(\prod_{m=\ell+1}^{n}(1+\alpha_{m})\right)\alpha_\ell w_\ell \nonumber \\
& \geq \left(\prod_{m=0}^{n}(1+\alpha_{m})\right)\left(v_0-\dfrac{\rho}{h}\sum_{\ell=0}^{n}\Delta t^\ell\, \|u^\ell\|_\infty\right)
\end{align}
Now, if $\Delta t^\ell \leq \frac{h^{1+\sigma}}{\|u_h^\ell\|_\infty^{1+\nu}} \leq \frac{h^{1+\sigma}}{\|u_h^\ell\|_\infty}$ then the inequality \eqref{ineq_vn+1} implies $\forall\ 0\leq n\leq N$ $$v_{n+1}> v_0-N\rho h^\sigma.$$
Hence, if $h\leq h_N:=\left(\dfrac{v_0-\mu}{N\rho}\right)^{1/\sigma}$, then $v_{n+1}>\mu$ and by definition of $v_n$, we obtain $U_h^{n+1}> \mu$. Moreover, if $\sum_{n\geq 0} \frac{1}{\|u_h^n\|_\infty^\nu} < \infty$ then $S = \sum_{n\geq 0} \Delta t^n \|u_h^n\|_\infty < \infty$ and \eqref{ineq_vn+1} implies $v_{n+1} > v_0-S\rho h^\sigma$. Take $h_* = \left(\dfrac{v_0-\mu}{S\rho}\right)^{1/\sigma}$ yields the result. The proof for $\Phi_h^n$ is similar.
\end{proof}
\begin{rmq}\label{rmq_0k7}
Equation \eqref{u^n>0} states that the discrete maximum principle is fulfilled for $\IP_0$ and $\IP_1$ approximations. 
\end{rmq}
\begin{thm}\label{thm_stab}
Let $\Delta t^n$ be given by \eqref{time_step}, and let $\Lambda_\infty = \| u^0_h \|_\infty + \| \phi^0_h \|_\infty$. Then, for any $N\in\mathds{N}$ there exists a constant $h_{N,\Lambda_\infty} >0$ depending only on $N$ and $\Lambda_\infty$ such that if $h\in(0,h_{N,\Lambda_\infty}]$, then
\begin{equation}\label{theo_stability}
 \sup_{1\leq n\leq N}(\| u^n_h \|_\infty + \| \phi^n_h \|_\infty)\leq 2\Lambda_\infty .
\end{equation}
\end{thm}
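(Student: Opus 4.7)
The plan is to adapt the element-wise induction used in Proposition~\ref{prop_stability}. From \eqref{disct_scheme}, the recurrences read
\[
U^{i,n+1}_h = U^{i,n}_h + \dfrac{\Delta t^n}{h}\bigl(E\,U^{i,n}_h + F\,U^{i-1,n}_h\bigr) + \Delta t^n\, \Phi^{i,n}_h,
\]
\[
\Phi^{i,n+1}_h = \Phi^{i,n}_h + \dfrac{\Delta t^n}{h}\bigl(\tilde E\,\Phi^{i,n}_h + \tilde F\,\Phi^{i+1,n}_h\bigr) + \Delta t^n\, |U^{i,n+1}_h|^p,
\]
where $E,F,\tilde E,\tilde F$ are $h$-independent matrices built from the reference versions of $M^i,R^i,A^i,B^i,C^i,D^i$ (as in Appendix~A). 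Since the adaptive rule \eqref{time_step} enforces $\Delta t^n \leq h^{1+\sigma}$ and $\Delta t^n/h \leq h^\sigma$, taking absolute values and the supremum over the indices yields the two one-step estimates
\[
\|U^{n+1}_h\|_\infty \leq (1+C_1 h^\sigma)\|U^n_h\|_\infty + h^{1+\sigma}\|\Phi^n_h\|_\infty,
\]
\[
\|\Phi^{n+1}_h\|_\infty \leq (1+C_2 h^\sigma)\|\Phi^n_h\|_\infty + h^{1+\sigma}\|U^{n+1}_h\|_\infty^p,
\]
where $C_1$ and $C_2$ depend only on the $\ell^\infty$ norms of the four matrices.

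I would then proceed by induction on $n$. Assuming $\|U^m_h\|_\infty + \|\Phi^m_h\|_\infty \leq 2\Lambda_\infty$ for every $0 \leq m \leq n < N$, iterating the first estimate and using $(1+C_1 h^\sigma)^{n+1} \leq e^{NC_1 h^\sigma}$ gives
\[
\|U^{n+1}_h\|_\infty \leq e^{NC_1 h^\sigma}\bigl(\Lambda_\infty + 2N\Lambda_\infty\, h^{1+\sigma}\bigr),
\]
which is $\leq 2\Lambda_\infty$ as soon as $h$ is small enough in terms of $N$ and $\Lambda_\infty$. Substituting this bound into the $\Phi$ recurrence and iterating yields
\[
\|\Phi^{n+1}_h\|_\infty \leq e^{NC_2 h^\sigma}\bigl(\Lambda_\infty + N h^{1+\sigma}(2\Lambda_\infty)^p\bigr),
\]
which is also $\leq 2\Lambda_\infty$ for a (possibly smaller) $h$. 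Defining $h_{N,\Lambda_\infty}$ as the minimum of the two resulting thresholds closes the induction and produces \eqref{theo_stability}.

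The principal obstacle is the nonlinear contribution $\|U^{n+1}_h\|_\infty^p$ in the $\Phi$ recurrence. Unlike the proof of Proposition~\ref{prop_stability}, where the factor $1/\|u^n_h\|_\infty^{1+\nu}$ in $\Delta t^n$ is exploited to tame large norms, here the exponent $p$ cannot be compensated through $\nu$, which is arbitrary; I would therefore rely solely on the uniform bound $\Delta t^n \leq h^{1+\sigma}$ and absorb $(2\Lambda_\infty)^p$ into the mesh threshold. This is precisely what forces $h_{N,\Lambda_\infty}$ to depend on $\Lambda_\infty^{p-1}$ and makes the statement local in time rather than global: extending the estimate to an unbounded horizon would require the summability assumption $\sum_n 1/\|u_h^n\|_\infty^\nu < \infty$ invoked in Proposition~\ref{prop_stability}.
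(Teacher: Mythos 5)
Your argument is essentially the paper's own proof: the same rewriting of \eqref{disct_scheme} as a linear recurrence with $h$-independent reference matrices, the same induction on $n$ with hypothesis $\|U^m_h\|_\infty+\|\Phi^m_h\|_\infty\leq 2\Lambda_\infty$, the same use of $\Delta t^n\leq h^{1+\sigma}$ to control both the amplification factor (the paper packages your $1+C_1h^\sigma$ as Lemma \ref{lemma1}, $\|M_n\|_\infty\leq 1+2\rho\Delta t^n/h$) and the nonlinear source, and the same absorption of $(2\Lambda_\infty)^p$ into the mesh threshold. One bookkeeping point needs fixing: you close the induction by making $\|U^{n+1}_h\|_\infty\leq 2\Lambda_\infty$ and $\|\Phi^{n+1}_h\|_\infty\leq 2\Lambda_\infty$ separately, which only bounds the \emph{sum} by $4\Lambda_\infty$; you must instead keep $\|U^0_h\|_\infty$ and $\|\Phi^0_h\|_\infty$ as the respective leading terms (they add up to $\Lambda_\infty$, not $2\Lambda_\infty$) and require, as the paper does, that the amplified leading contribution be at most $\tfrac{3}{2}\Lambda_\infty$ and each correction term at most $\tfrac{1}{4}\Lambda_\infty$. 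With that adjustment the threshold $h_{N,\Lambda_\infty}$ you obtain is of the same nature as the paper's, and your closing remark about the unavoidable dependence on $\Lambda_\infty^{p-1}$ and the local-in-time character of the bound is accurate.
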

\begin{proof}
First, we rewrite the scheme \eqref{disct_scheme} as 
\begin{align}\label{u_phi}
\left\{
\begin{array}{lll}
U^{n+1}_h = & \! M_n U^n_h + \Delta t^n \Phi^n_h & \\
\Phi^{n+1}_h = & \! N_n \Phi^n_h + \Delta t^n f(U^{n+1}_h) &
\end{array}
\right.
\end{align}
where
\begin{equation*}
M_n = \begin{pmatrix}
\mathcal{M}_A & 0 & \dots & 0 & \mathcal{M}_B \\
\mathcal{M}_B & \mathcal{M}_A & 0 & \dots & 0 \\
0 & \mathcal{M}_B & \mathcal{M}_A & \ddots & \vdots\\
\vdots & \ddots & \ddots & \ddots & 0 \\
0 & \dots & 0 & \mathcal{M}_B & \mathcal{M}_A
\end{pmatrix} \quad \text{and }\ 
N_n = \begin{pmatrix}
\mathcal{N}_D & \mathcal{N}_C & 0 & \dots & 0 \\
0 & \mathcal{N}_D & \mathcal{N}_C & \ddots & \vdots \\
\vdots & \ddots & \ddots & \ddots & 0\\
0 & \dots & 0 & \mathcal{N}_D & \mathcal{N}_C \\
\mathcal{N}_C & 0 & \dots & 0 & \mathcal{N}_D
\end{pmatrix}
\end{equation*}
with 
\begin{align*}
\mathcal{M}_A & = I_{k+1} - \Delta t^n M^{-1}(R+A), \quad  \mathcal{M}_B = \Delta t^n M^{-1}B, \\
\mathcal{N}_D & = I_{k+1} - \Delta t^nM^{-1}(D-R), \quad  \mathcal{N}_C = \Delta t^n M^{-1}C,
\end{align*}
and
\begin{align*}
f(v) = (\vert v_1\vert^p,\dots,\vert v_I\vert^p)^{T}\quad \text{for} \quad v = (v_1,\dots,v_I)^T.
\end{align*}
Now, we prove \eqref{theo_stability} by induction on $n$. Let $N\in \IN$ and assume that 
$$ \|U^n_h\|_\infty + \|\Phi^n_h\|_\infty \leq 2\Lambda_\infty\quad \forall\ 0\leq n\leq N-1.$$ Using \eqref{u_phi}, we may rewrite $U^{n+1}$ and $\Phi^{n+1}$ as
\begin{align}
U^{n+1}_h & = M_n\dots M_0\ U^0_h + \sum_{j=0}^{n}\Delta t^{n-j} M_n\dots M_{n-j+1} \Phi^{n-j}_h, \label{u=MMM}\\
\Phi^{n+1}_h & = N_n\dots N_0\ \Phi^0_h + \sum_{j=0}^{n}\Delta t^{n-j}N_n\dots N_{n-j+1} \ f(U^{n-j+1}_h) \label{phi=MMM}.
\end{align}
We have the following result.
\begin{lem}\label{lemma1}
$\|M_n\|_\infty = \|N_n\|_\infty \leq 1+2\rho\dfrac{\Delta t^n}{h}$.
\end{lem}
\begin{proof}
See \ref{app_proof_lem}.
\end{proof}
It follows by the induction hypothesis
\begin{align*}
\|U^{n+1}_h\|_\infty & \leq \prod_{\ell=0}^n \left(1+2\rho\frac{\Delta t^\ell}{h}\right)\|U^0_h\|_\infty + h^{1+\sigma} \sum_{j=0}^{n} \prod_{\ell=0}^{j-1} \left(1+2\rho\frac{\Delta t^{n-\ell}}{h}\right) \|\Phi^{n-j}_h\|_\infty \\
& \leq \prod_{\ell=0}^n \left(1+2\rho h^\sigma\right)\left(\|U^0_h\|_\infty + 2\Lambda_\infty(n+1)h^{1+\sigma}\right)\\
& = \left(1+2\rho h^\sigma\right)^{n+1}\left(\|U^0_h\|_\infty + 2\Lambda_\infty(n+1)h^{1+\sigma}\right)
\end{align*}
where $\rho$ is given by \eqref{rhoo}. It follows that $\forall \ 0\leq n \leq N-1$
\begin{equation}\label{estim_Un+1}
\|U^{n+1}_h\|_\infty \leq \left(1+2\rho h^\sigma\right)^N\left(\|U^0_h\|_\infty + 2\Lambda_\infty Nh^{1+\sigma} \right).
\end{equation}
Similarly, we obtain from \eqref{phi=MMM}
\begin{align*}
\|\Phi^{n+1}_h\|_\infty & \leq \prod_{\ell=0}^n \left(1+2\rho\frac{\Delta t^\ell}{h}\right)\|\Phi^0_h\|_\infty + h^{1+\sigma}\sum_{j=1}^{n} \prod_{\ell=0}^{j-1} \left(1+2\rho\frac{\Delta t^{n-\ell}}{h}\right) \|U^{n-j+1}_h\|_\infty^p \\
& \qquad + h^{1+\sigma}\|U^{n+1}_h\|_\infty^p\\
& \leq \left(1+2\rho h^\sigma\right)^{n+1}\left(\|\Phi^0_h\|_\infty + (2\Lambda_\infty)^p n h^{1+\sigma}\right) \\
& \qquad + h^{1+\sigma}\left(1+2\rho h^\sigma\right)^{p(n+1)}\left(\|U^0_h\|_\infty + 2\Lambda_\infty(n+1)h^{1+\sigma}\right)^p.
\end{align*}
Using the identity $(x+y)^r \leq 2^{r-1}(x^r+y^r)$ for any non negative reals $x$ and $y$ and any $r\geq 1$, we obtain $\forall\ 0\leq n \leq N-1$
\begin{align}\label{estim_Phin+1}
\|\Phi^{n+1}_h\|_\infty & \leq \left(1+2\rho h^\sigma\right)^{N}\left(\|\Phi^0_h\|_\infty + (2\Lambda_\infty)^p N h^{1+\sigma} \right) \nonumber\\
& \qquad + 2^{p-1}h^{1+\sigma}\left(1+2\rho h^\sigma\right)^{pN}\left(\|U^0_h\|_\infty^p + (2\Lambda_\infty Nh^{1+\sigma})^p\right).
\end{align}
It follows by \eqref{estim_Un+1} and \eqref{estim_Phin+1}
\begin{align*}
\|U^{n+1}_h\|_\infty+\|\Phi^{n+1}_h\|_\infty & \leq \left(1+2\rho h^\sigma\right)^{N}\Lambda_\infty + N h^{1+\sigma}\left(1+2\rho h^\sigma\right)^{N}\left(2\Lambda_\infty + (2\Lambda_\infty)^p\right) \nonumber\\
& \quad +  2^{p-1}h^{1+\sigma}\left(1+2\rho h^\sigma\right)^{pN}\Lambda_\infty^p\left(1 + (2Nh^{1+\sigma})^p\right).
\end{align*}
Set 
\begin{align*}
h_{N,\Lambda_\infty} & = \min\left\{\left(\dfrac{(\frac{3}{2})^\frac{1}{N}-1}{2\rho}\right)^\frac{1}{\sigma},\ \dfrac{\Lambda_\infty}{\left[12N\Lambda_\infty(1+(2\Lambda_\infty)^{p-1})\right]^{\frac{1}{1+\sigma}}},\right.\\
& \qquad \qquad \left. \dfrac{\Lambda_\infty}{\left[4\left(3\Lambda_\infty\right)^p\left(1+\frac{\Lambda_\infty^{p\sigma}}{6^p(1+(2\Lambda_\infty)^{p-1})^p}\right)\right]^{\frac{1}{1+\sigma}}}\right\},
\end{align*}
then one can check that $\forall\ h\in(0,h_{N,\Lambda_\infty}]$ we have
$$\|U^{n+1}_h\|_\infty+\|\Phi^{n+1}_h\|_\infty \leq \dfrac{3\Lambda_\infty}{2} + \dfrac{\Lambda_\infty}{4}+ \dfrac{\Lambda_\infty}{4} = 2\Lambda_\infty.$$
\end{proof}
%
%
%
\section{Numerical blow-up}
\label{sec_blowup}
In this section, we prove that the numerical blow-up time converges toward the exact blow-up time if the discrete solution $u_h$ tends toward the exact solution $u$ as $h$ tends to zero. The following functional will be useful. 
\begin{align}\label{funct_K}
K(u(t)) := \frac{1}{b-a}\int_a^b u(x,t)dx.
\end{align}
\begin{prop}\label{prop_Ku_to_inf}\cite{Sasaki} Assume that 
\begin{align*}
\alpha = K(u_0)\geq 0,\, \,  \beta = K(u_1) >0.
\end{align*}
Then, the solution $u$ of \eqref{eqq1} blows up in finite time $T_\infty \in (0,\infty)$.
\end{prop}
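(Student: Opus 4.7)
The plan is to reduce the PDE blow-up to an ODE blow-up for the scalar quantity $F(t) := K(u(t))$. First I would compute $F'$ and $F''$ using \eqref{eqq1} and the periodic boundary conditions. Differentiating under the integral sign gives $F'(t) = \frac{1}{b-a}\int_a^b \partial_t u\,dx$, so $F(0)=\alpha$ and $F'(0)=\beta$. For the second derivative,
\begin{equation*}
F''(t) \;=\; \frac{1}{b-a}\int_a^b \partial_{tt}u\,dx \;=\; \frac{1}{b-a}\int_a^b \partial_{xx}u\,dx \,+\, \frac{1}{b-a}\int_a^b |u|^p\,dx,
\end{equation*}
and the boundary term vanishes by periodicity, leaving $F''(t)=\frac{1}{b-a}\int_a^b |u|^p\,dx$.

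Next I would apply Jensen's inequality to the convex function $x\mapsto|x|^p$ to obtain $F''(t)\geq |F(t)|^p$. Because $F''\geq 0$ and $F'(0)=\beta>0$, the derivative $F'$ is non-decreasing, so $F'(t)\geq\beta>0$ for all $t$; combined with $F(0)=\alpha\geq 0$, this yields $F(t)\geq\alpha\geq 0$, and we may drop the absolute value to get the clean differential inequality
\begin{equation*}
F''(t)\;\geq\;F(t)^p,\qquad F(0)=\alpha\geq 0,\qquad F'(0)=\beta>0.
\end{equation*}

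The final step is the classical scalar blow-up argument. Multiplying by $F'(t)>0$ and integrating from $0$ to $t$ yields
\begin{equation*}
(F'(t))^2 \;\geq\; \beta^2 - \tfrac{2}{p+1}\alpha^{p+1} + \tfrac{2}{p+1}F(t)^{p+1}.
\end{equation*}
Since $p>1$ the exponent $(p+1)/2$ on the right exceeds $1$, so extracting the square root and separating variables gives
\begin{equation*}
\int_{\alpha}^{+\infty}\frac{dz}{\sqrt{\beta^2 - \tfrac{2}{p+1}\alpha^{p+1}+\tfrac{2}{p+1}z^{p+1}}}\;\geq\; t
\end{equation*}
for every $t$ in the maximal existence interval, and the integral on the left is finite. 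Hence the maximal time is bounded by some $T_\infty<\infty$. Finally, as $F(t)\leq \|u(\cdot,t)\|_{L^\infty(\Omega)}$, the blow-up of $F$ forces $\|u(\cdot,t)\|_{L^\infty}\to\infty$, contradicting prolongability beyond $T_\infty$.

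The only subtlety I anticipate is the justification of differentiating under the integral sign and of the boundary-term cancellation; both are immediate once one recalls that solutions are smooth on $[0,T_\infty)$ and satisfy periodic boundary conditions together with their spatial derivatives, so the core of the argument is genuinely just the Jensen-plus-convex-ODE mechanism sketched above.
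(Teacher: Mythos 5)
Your argument is correct and is precisely the classical Glassey--Levine concavity argument that the paper delegates to \cite{Sasaki}; it is also the exact continuous counterpart of the discrete computation the paper carries out in Proposition \ref{K_h}. The only point worth making explicit is that the constant $\beta^2-\tfrac{2}{p+1}\alpha^{p+1}$ may well be negative, but since $F(t)\geq\alpha$ the radicand is bounded below by $\beta^2>0$ on the whole integration range $[\alpha,\infty)$, so the square root, the separation of variables, and the convergence of the integral (integrand bounded by $1/\beta$ near $z=\alpha$ and decaying like $z^{-(p+1)/2}$ with $(p+1)/2>1$ at infinity) all go through.
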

\begin{defn}\label{def_blow}
We define the numerical blow-up time by $$T(h)=\lim_{n\longrightarrow\infty} t^n = \sum_{n=0}^\infty\Delta t^n.$$ We say that the numerical solution blows up if $$\lim_{n\to\infty}\Vert u^n_h\Vert_{L^\infty(a,b)} = \lim_{t^n\to T(h)}\Vert u^n_h\Vert_{L^\infty(a,b)} = \infty.$$ Moreover, we say that the numerical solution blows up in finite time if \mbox{$T(h)<\infty$.}
\end{defn}
\begin{prop}\label{K_h}
Let $0\leq k \leq 7$ and let $(u_h^n,\phi_h^n)$ be the solution of \eqref{disct_for_v}. Define
\begin{equation}\label{the_fonc_K_h}
K_h(u^n_h) = \frac{1}{b-a}\sum_{i=1}^I\int_{K_i}u^{i,n}_h(x) dx, 
\end{equation}
and suppose $\beta_h := K_h(u^1_h) > 0$ and $\alpha_h := K_h(u^0_h)\geq 0$. Then $(K_h(u^n_h))_n$ is a strictly increasing unbounded sequence and for all $n\geq 0$
\begin{equation*}
\left(\frac{K_h(u^{n+1}_h)-K_h(u^n_h)}{\Delta t^n}\right)^2\geq \frac{\lambda}{p+1}\left(K_h(u^n_h)\right)^{p+1} + \gamma_h\geq 0
\end{equation*}
where $$\gamma_h = \left(\frac{\beta_h - \alpha_h}{\Delta t^0}\right)^2 - \frac{\lambda}{p+1}\alpha_h^{p+1}$$
and $\lambda>0$ is a constant independent of $h$.
\end{prop}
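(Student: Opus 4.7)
My plan follows the Glassey-type template behind Proposition \ref{prop_Ku_to_inf}, adapted to the DG setting. Introduce the two auxiliary quantities
\[
L_h^n := \frac{1}{b-a}\sum_{i=1}^I\int_{K_i}\phi_h^{i,n}\,dx,\qquad M_h^n := \frac{1}{b-a}\sum_{i=1}^I\int_{K_i}\p(|u_h^{i,n+1}|^p)\,dx,
\]
so that the target estimate becomes $(L_h^n)^2\geq \tfrac{\lambda}{p+1}(K_h^n)^{p+1}+\gamma_h$. The first move is to test \eqref{disct_for_v1}--\eqref{disct_for_v2} against the constant function $\mathbf{1}_{K_i}\in V_h^k$ (obtained as the sum of the Lagrange basis, which is a partition of unity). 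In each cell one has $\int_{K_i}\partial_x u_h^{i,n}\,dx=(u_h)_{i+1/2}^--(u_h)_{i-1/2}^+$, which combines with $[u_h^n]_{i-1/2}=(u_h)_{i-1/2}^+-(u_h)_{i-1/2}^-$ to leave only $(u_h)_{i+1/2}^--(u_h)_{i-1/2}^-$; summing over $i$ the outer traces telescope and vanish under periodicity. The same manipulation for $\phi_h$ yields the exact scheme identities
\[
K_h^{n+1}-K_h^n=\Delta t^n L_h^n,\qquad L_h^{n+1}-L_h^n=\Delta t^n M_h^n,
\]
and in particular $L_h^0=(\beta_h-\alpha_h)/\Delta t^0$.

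The second ingredient is the lower bound $M_h^n\geq |K_h^{n+1}|^p$. Writing $\p(|u_h^{i,n+1}|^p)=\sum_j|u_j^{i,n+1}|^p\varphi_j^i$ and $w_j^i:=\int_{K_i}\varphi_j^i\,dx$, the hypothesis $k\leq 7$ is precisely the threshold ensuring that the Lagrange quadrature weights $w_j^i$ are all positive (Newton--Cotes positivity; cf.\ Remark \ref{rmq_0k7}), with $\sum_j w_j^i=h$. Two successive applications of Jensen's inequality to the convex function $|\cdot|^p$, first with the probability weights $w_j^i/h$ inside each cell, then with uniform weights $1/I$ across the $I=(b-a)/h$ cells, give after collecting the powers of $h$ exactly $M_h^n\geq |K_h^{n+1}|^p$. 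Feeding this bound into the identities, and using that $L_h^n\geq L_h^0>0$ and $M_h^n\geq 0$ propagate inductively, I expand
\[
(L_h^{n+1})^2-(L_h^n)^2\geq 2\Delta t^n L_h^n M_h^n=2(K_h^{n+1}-K_h^n)M_h^n\geq 2(K_h^{n+1}-K_h^n)(K_h^{n+1})^p,
\]
and apply the convexity bound $(K_h^{n+1})^{p+1}-(K_h^n)^{p+1}\leq (p+1)(K_h^{n+1})^p(K_h^{n+1}-K_h^n)$, valid because $K_h^{n+1}\geq K_h^n\geq 0$. A telescoping sum from $0$ to $n-1$ then yields the announced inequality with $\lambda=2$ and $\gamma_h=((\beta_h-\alpha_h)/\Delta t^0)^2-\tfrac{2}{p+1}\alpha_h^{p+1}$. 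Strict monotonicity of $K_h^n$ follows from $K_h^{n+1}-K_h^n=\Delta t^n L_h^n\geq \Delta t^n L_h^0>0$, and the rightmost nonnegativity $\tfrac{\lambda}{p+1}(K_h^n)^{p+1}+\gamma_h\geq 0$ holds because that quantity is nondecreasing in $n$ and equals $(L_h^0)^2\geq 0$ at $n=0$.

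The main obstacle is the unboundedness of $(K_h^n)$. Monotonicity alone yields only a limit $K_\infty\in(\alpha_h,+\infty]$, and the recursive estimate bounds $L_h^n$ only from below. I would argue by contradiction: if $K_\infty<\infty$, the summed form of identity (i) together with $L_h^n\geq L_h^0>0$ forces $T(h)=\sum_n\Delta t^n\leq (K_\infty-\alpha_h)/L_h^0<\infty$; since $h$ is fixed and $\Delta t^n\geq h^{1+\sigma}$ whenever $\|u_h^n\|_\infty\leq 1$, the definition \eqref{time_step} of the step then compels $\|u_h^n\|_\infty\to\infty$. The final step is to exploit the sharpened inequality $L_h^n\geq \sqrt{\tfrac{2}{p+1}(K_h^n)^{p+1}+\gamma_h}$ together with the positivity guaranteed by Proposition \ref{prop_stability} to show that the cumulative increments $K_h^{n+1}-K_h^n\geq c\,(K_h^n)^{(p+1)/2}\Delta t^n$ must eventually exceed $K_\infty-\alpha_h$, a contradiction. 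Reconciling the blow-up of $\|u_h^n\|_\infty$ with a potentially bounded average $K_h^n$ under the adaptive-in-time mesh is, in my view, the genuinely delicate point of the proof.
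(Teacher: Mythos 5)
Your two exact identities $K_h(u_h^{n+1})-K_h(u_h^n)=\Delta t^n K_h(\phi_h^n)$ and $K_h(\phi_h^{n+1})-K_h(\phi_h^n)=\Delta t^n K_h\big(\p(|u_h^{n+1}|^p)\big)$, obtained by testing against the constant function and telescoping the traces under periodicity, are exactly the paper's starting point. Your proof of the key lower bound $K_h\big(\p(|u_h^{n+1}|^p)\big)\geq \lambda\,(K_h(u_h^{n+1}))^p$ by Jensen's inequality with the positive Newton--Cotes weights (positivity being precisely where $k\leq 7$ enters) is a genuinely cleaner route than the paper's Lemma \ref{lem_Kh>Khp}, which goes through the discrete inequality $|\sum_j a_j|^p\leq m^{p-1}\sum_j|a_j|^p$ and lands on a smaller constant; your version gives $\lambda=1$ in the lemma directly. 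The recursion on $(K_h(\phi_h^n))^2$, the convexity/telescoping step, the strict monotonicity, and the nonnegativity of $\frac{\lambda}{p+1}(K_h(u_h^n))^{p+1}+\gamma_h$ all match the paper's argument (the paper bounds the telescoped sum below by $\int_{\alpha_h}^{K_h(u_h^{n+1})}z^p\,dz$, which is the same convexity estimate in integral form).

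The gap is in the unboundedness claim, and it is exactly the point you flag at the end. You correctly reduce to: if $K_h(u_h^n)\uparrow K_\infty<\infty$ then $\sum_n\Delta t^n<\infty$, hence $\Delta t^n\to 0$, hence by \eqref{time_step} $\|u_h^n\|_\infty\to\infty$. But the contradiction you then propose --- that the cumulative increments $K_h(u_h^{n+1})-K_h(u_h^n)\geq c\,(K_h(u_h^n))^{(p+1)/2}\Delta t^n$ must eventually exceed $K_\infty-\alpha_h$ --- does not follow: with $K_h(u_h^n)$ bounded the right-hand side is $O(\Delta t^n)$ and its series converges, so nothing in the energy inequality prevents the increments from summing to $K_\infty-\alpha_h$. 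The missing ingredient is not a sharper differential inequality but the $L^1$--$L^\infty$ comparison on the fixed finite-dimensional space $V_h^k$: once Proposition \ref{prop_stability} gives $u_j^{i,n}>0$ and the weights $\alpha_j=\int_{-1}^1\varphi_j\,dx$ are positive for $k\leq 7$, one has
\begin{equation*}
K_h(u_h^n)=\|u_h^n\|_1=\frac{1}{b-a}\sum_{i=1}^I\frac{h}{2}\sum_{j=1}^{k+1}\alpha_j\,u_j^{i,n}\;\geq\;\frac{h\min_j\alpha_j}{2(b-a)}\,\|u_h^n\|_\infty ,
\end{equation*}
so $\|u_h^n\|_\infty\to\infty$ directly contradicts the boundedness of $K_h(u_h^n)$. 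This is the content of the paper's Lemma \ref{rmq_Kh_L1}; the paper's own phrasing of the contradiction runs the other way (a bounded subsequence forces $\Delta t^{n_\ell}\not\to 0$, whence $K_h(u_h^{n_\ell+2})-K_h(u_h^{n_\ell+1})\geq\Delta t^{n_\ell+1}K_h(\phi_h^0)$ stays bounded away from zero, contradicting convergence), but either way the positivity-plus-quadrature-weights comparison is indispensable and your argument does not close without it.
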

\begin{proof}
Recall that the scheme \eqref{disct_for_v1}-\eqref{disct_for_v2} is equivalent to equations \eqref{semi_disct_scheme_for_u_2}-\eqref{semi_disct_scheme_for_phi_2}. Then, take $\varphi^i_j \equiv 1$ in \eqref{semi_disct_scheme_for_u_2} yields
\begin{align*}
\int_{K_i}\dfrac{u^{i,n+1}_h-u^{i,n}_h}{\Delta t^n}dx + u_h^{i,n}(x_{i+\frac{1}{2}}) - u_h^{i-1,n}(x_{i-\frac{1}{2}}) =\int_{K_i} \phi^{i,n}_h dx.
\end{align*}
Sum up over $i=1,\dots I$ and use the periodic boundary condition,
\begin{equation}\label{ineq_Kh_u}
\dfrac{K_h(u^{n+1}_h) - K_h(u^n_h)}{\Delta t^n} = K_h(\phi^n_h)\quad \forall\ n\geq 0.
\end{equation}
In particular 
\begin{equation}\label{ineq_Kh_phi0}
\frac{K_h(u^1_h) - K_h(u^0_h)}{\Delta t^0} = K_h(\phi^0_h)>0.
\end{equation}
Similarly, we have by \eqref{semi_disct_scheme_for_phi_2}
\begin{align*}
\int_{K_i}\dfrac{\phi^{i,n+1}_h-\phi^{i,n}_h}{\Delta t^n}dx - \phi_h^{i+1,n}(x_{i+\frac{1}{2}}) + \phi_h^{i,n}(x_{i-\frac{1}{2}}) = \int_{K_i} \p\left(|u_h^{i,n+1}|^p\right) dx,
\end{align*}
and hence 
\begin{equation*}
\dfrac{K_h(\phi^{n+1}_h) - K_h(\phi^n_h)}{\Delta t^n} = K_h(\p(|u^{n+1}_h|^p))\quad \forall\ n\geq 0.
\end{equation*}
At this stage, we need the following technical lemma.
\begin{lem}\label{lem_Kh>Khp}
Let $0\leq k \leq 7$. Then, there exists $\lambda>0$ independent of $h$ such that
$$K_h\big{(}\p(|u^{n+1}_h|^p)\big{)} \geq \lambda\left(K_h(u^{n+1}_h)\right)^p .$$
\end{lem}
\begin{proof}
See \ref{app_lem_lambda}.
\end{proof}
Thus, we have
\begin{equation}\label{ineq_Kh_phi}
\frac{K_h(\phi^{n+1}_h) - K_h(\phi^n_h)}{\Delta t^n} \geq \lambda\left(K_h(u^{n+1}_h)\right)^p.
\end{equation}
Using \eqref{ineq_Kh_u} and \eqref{ineq_Kh_phi}, one can easily show by induction on $n$ that $K_h(u^n_h)$ and $K_h(\phi^n_h)$ are non negative for all $n$. Now, combining \eqref{ineq_Kh_u}, \eqref{ineq_Kh_phi0} and \eqref{ineq_Kh_phi} yields 
\begin{align}
\frac{K_h(u^{n+2}_h) - K_h(u^{n+1}_h)}{\Delta t^{n+1}}&\geq \frac{K_h(u^{n+1}_h)-K_h(u^n_h)}{\Delta t^n} + \lambda \Delta t^n(K_h(u^{n+1}_h))^p\label{croi_of_K_h}\\
&\geq \frac{K_h(u^{1}_h)-K_h(u^0_h)}{\Delta t^0}+ \lambda \sum_{k=0}^{n} \Delta t^k (K_h(u^{k+1}_h))^p \label{croi}  \\ 
& > 0 \quad \forall\ n \geq 0. \nonumber 
\end{align}
Consequently, $\left(K_h(u^n_h)\right)_n$ is a strictly increasing sequence. Now, we again make use of \eqref{croi_of_K_h} to obtain
\begin{align*}
& \left(\frac{K_h(u^{n+2}_h)-K_h(u^{n+1}_h)}{\Delta t^{n+1}}\right)^2 \\
& \qquad \geq \frac{K_h(u^{n+1}_h)-K_h(u^{n}_h)}{\Delta t^{n}}\left(\frac{K_h(u^{n+1}_h)-K_h(u^n_h)}{\Delta t^n} +\lambda \Delta t^n(K_h(u^{n+1}_h))^p\right)\\
& \qquad = \left(\frac{K_h(u^{n+1}_h)-K_h(u^n_h)}{\Delta t^n}\right)^2 + \lambda \big(K_h(u^{n+1}_h)-K_h(u^n_h)\big)(K_h(u^{n+1}_h))^p.
\end{align*}
A straightforward induction implies
\begin{align*}
&\left(\frac{K_h(u^{n+2}_h)-K_h(u^{n+1}_h)}{\Delta t^{n+1}}\right)^2 \\
&\qquad \geq \lambda \sum_{k=0}^n\Big(K_h(u^{k+1}_h)-K_h(u^k_h)\Big)\left(K_h(u^{k+1}_h)\right)^p + \left(\frac{K_h(u^1_h)-K_h(u^0_h)}{\Delta t^0}\right)^2 \\
&\qquad \geq \lambda \int_{\alpha_h}^{K_h(u^{n+1}_h)} z^p dz + \left(\dfrac{\beta_h-\alpha_h}{\Delta t^0}\right)^2\\
&\qquad = \dfrac{\lambda}{p+1}\Big( (K_h(u^{n+1}_h))^{p+1} - \alpha_h^{p+1} \Big) + \left(\dfrac{\beta_h-\alpha_h}{\Delta t^0}\right)^2. 
\end{align*}
Moreover, since $K_h(u^n_h)$ is increasing in $n$, then $\dfrac{\lambda }{p+1}\Big( (K_h(u^{n+1}_h))^{p+1} - \alpha_h^{p+1} \Big) + \left(\dfrac{\beta_h-\alpha_h}{\Delta t^0}\right)^2$ is non negative. Finally, assume $(K_h(u^n_h))_n$ is bounded, then it is convergent. Hence, we can extract a sub-sequence $(u^{n_\ell}_h)_{n_\ell}$ of $(u_h^n)_n$ which converges a.e., and thus it is bounded. We deduce from \eqref{time_step} that $\Delta t^{n_\ell} \not \to 0$ as $n_\ell$ goes to infinity, and using \eqref{ineq_Kh_phi0} and \eqref{croi} we obtain
\begin{align*}
0 < \Delta t^{n_\ell+1}\,K_h(\phi^0_h) \leq K_h(u^{n_\ell+2}_h)-K_h(u^{n_\ell+1}_h).
\end{align*}
Take the limit when $n_\ell$ tends to infinity gives a contradiction with \eqref{ineq_Kh_phi0}. Thus, $(K_h(u^n_h))_n$ is unbounded and the proof is completed.
\end{proof}
\begin{lem}\label{rmq_Kh_L1}
Let $0\leq k \leq 7$ and let $(u_h^n,\phi_h^n)$ be the solution of \eqref{disct_for_v}. Then $(u^n_h)_n$ blows up.
\end{lem}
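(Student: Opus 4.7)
The plan is to deduce blow-up in $L^\infty$ directly from the unboundedness of the mean value $K_h(u_h^n)$ established in Proposition \ref{K_h}. Concretely, the same hypotheses on the initial data (i.e.\ $\alpha_h = K_h(u_h^0)\geq 0$ and $\beta_h = K_h(u_h^1)>0$) will be assumed, which in practice will be inherited from the continuous assumption of Proposition \ref{prop_Ku_to_inf} together with the convergence of $\p u_0,\p \phi_0$ toward $u_0,u_1+u_0'$ as $h\to 0$.

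First, I would invoke Proposition \ref{K_h}, which already gives that the sequence $(K_h(u_h^n))_n$ is strictly increasing and unbounded, hence
\[
\lim_{n\to\infty} K_h(u_h^n) = +\infty .
\]
Second, I would establish the elementary majorisation
\[
0 \leq K_h(u_h^n) = \frac{1}{b-a}\int_a^b u_h^n(x)\,dx \leq \frac{1}{b-a}\,(b-a)\,\|u_h^n\|_{L^\infty(a,b)} = \|u_h^n\|_{L^\infty(a,b)} ,
\]
where the lower bound $K_h(u_h^n)\geq 0$ was observed during the induction in the proof of Proposition \ref{K_h} (as a consequence of \eqref{ineq_Kh_u} and \eqref{ineq_Kh_phi}), and the upper bound is simply the standard mean-versus-sup inequality applied cell by cell.

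Combining these two facts yields $\|u_h^n\|_{L^\infty(a,b)}\to +\infty$ as $n\to\infty$, which is exactly the blow-up condition in Definition \ref{def_blow}. There is essentially no real obstacle here: all the analytical work has been front-loaded into Proposition \ref{K_h} (and the technical Lemma \ref{lem_Kh>Khp} underpinning it), and the remaining argument is a one-line comparison between the $L^1$ mean and the $L^\infty$ norm. The only point deserving a brief mention is that the lemma does not distinguish between $T(h)<\infty$ or $T(h)=\infty$; it only asserts divergence of $\|u_h^n\|_{L^\infty}$, which is what Definition \ref{def_blow} calls ``blowing up'' (the finiteness of $T(h)$ being a separate property addressed elsewhere).
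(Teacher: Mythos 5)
Your proof is correct, but it takes a genuinely different (and more economical) route than the paper. The paper's own proof goes through positivity: it uses Proposition \ref{prop_stability} to guarantee that, for positive initial data and $h$ small enough, all degrees of freedom $u^{i,n}_j$ stay positive, and then uses the positivity of the weights $\alpha_j=\int_{-1}^1\varphi_j\,dx$ for $0\leq k\leq 7$ to identify $K_h(u_h^n)$ with the weighted $\ell^1$-type norm $\|u_h^n\|_1$, whose divergence then forces $\|u_h^n\|_{L^\infty(a,b)}\to\infty$. You instead bypass all of this with the one-line mean-versus-sup bound $K_h(u_h^n)\leq\frac{1}{b-a}\int_a^b|u_h^n|\leq\|u_h^n\|_{L^\infty(a,b)}$, which needs neither the positivity of the discrete solution nor the sign of the $\alpha_j$; the restriction $0\leq k\leq 7$ then enters only through Proposition \ref{K_h} itself (via Lemma \ref{lem_Kh>Khp}), and you do not need to invoke Proposition \ref{prop_stability} or require $h$ small. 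This makes your argument slightly more general in its hypotheses and cleaner; what the paper's detour buys is the additional identity $K_h(u_h^n)=\|u_h^n\|_1$, i.e.\ blow-up of the discrete $L^1$-type norm as well, which is a stronger piece of information but is not needed for the statement as given. Your closing remark that the lemma asserts only divergence of $\|u_h^n\|_{L^\infty}$ and not finiteness of $T(h)$ is also consistent with the paper, which defers finiteness to Lemma \ref{lemcho}.
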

\begin{proof}
If $0\leq k \leq 7$, then $\alpha_j:=\int_{-1}^1\varphi_j\, dx > 0$ for all $1\leq j \leq k+1$ (see table \ref{tab_poly}). Consequently, one may deduce from Proposition \ref{prop_stability} that if the initial data $(u_0,\phi_0)$ are positives, then $K_h(u_h^n) = \|u_h^n\|_1$ for $h$ small enough, where
$$\|u_h\|_1 = \dfrac{1}{b-a}\sum_{i=1}^I \frac{h}{2}\sum_{j=1}^{k+1}\alpha_j |u^i_j|.$$  It follows that $\|u_h^n\|_1\xrightarrow[n \to \infty]{}\infty$ and thus $\|u_h^n\|_{L^\infty(a,b)}\xrightarrow[n \to \infty]{}\infty$.
\end{proof}

Define 
\begin{align*}
G(z) = \sqrt{\frac{\lambda }{p+1}z^{p+1} + \gamma_h},
\end{align*}
then $G$ is a strictly increasing function in $[\alpha_h,\infty)$. In view of Proposition \ref{K_h}, we can proceed the same as in \cite{Cho} to prove the following.
\begin{lem}\label{lemcho}
There exists a constant $C>0$ independent of $h$ such that 
\begin{equation*}
T(h)\leq 2\Big( \int_{\alpha_h}^{\infty}\frac{dz}{G(z)} + Ch\Big). 
\end{equation*}
In particular, $(u_h^n)_n$ blows up in a finite time $T(h)$.
\end{lem}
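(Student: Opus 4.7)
The plan is to combine the discrete differential inequality in Proposition \ref{K_h} with a Riemann-sum comparison to an improper integral. First, I would take square roots in Proposition \ref{K_h} (both sides are non-negative on $[\alpha_h,\infty)$ by the way $\gamma_h$ is set up and the monotonicity of $K_h(u^n_h)$) to deduce the pointwise bound
\[
\Delta t^n \leq \frac{K_h(u^{n+1}_h) - K_h(u^n_h)}{G(K_h(u^n_h))} \quad \forall\, n \geq 0.
\]
Since $G(z)\sim \sqrt{\lambda/(p+1)}\, z^{(p+1)/2}$ as $z\to\infty$ and $p>1$, the improper integral $\int_{\alpha_h}^\infty dz/G(z)$ converges, and since $K_h(u^n_h)\to\infty$ by Proposition \ref{K_h}, summing should give $T(h)=\sum_n \Delta t^n$ bounded by something close to this integral.

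The right-hand side above is a left-Riemann-type value for the decreasing integrand $1/G$ on the subinterval $[K_h(u^n_h), K_h(u^{n+1}_h)]$. To compare with $\int_{K_h(u^n_h)}^{K_h(u^{n+1}_h)} dz/G(z)$, I would split the indices $n\geq 0$ into two regimes: (a) a transient regime where $K_h(u^n_h)$ lies below some threshold $M_0$ independent of $h$, and (b) an asymptotic regime where $K_h(u^n_h)\geq M_0$. For (a), the number of such indices is uniformly bounded in $h$ (for $h$ small enough), since $(K_h(u^n_h))_n$ is strictly increasing, unbounded, and starts at $\alpha_h$; each corresponding $\Delta t^n$ is at most $h^{1+\sigma}$ by \eqref{time_step}, hence the total contribution to $T(h)$ is $O(h)$. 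For (b), once $K_h(u^n_h)$ is large, \eqref{time_step} together with the positivity $0<K_h(u^n_h)\leq \|u_h^n\|_\infty$ (Proposition \ref{prop_stability}) forces the relative increment $K_h(u^{n+1}_h)/K_h(u^n_h)$ to be close to $1$, so that $G(K_h(u^{n+1}_h))\leq 2\,G(K_h(u^n_h))$, giving
\[
\frac{K_h(u^{n+1}_h)-K_h(u^n_h)}{G(K_h(u^n_h))} \leq 2\int_{K_h(u^n_h)}^{K_h(u^{n+1}_h)} \frac{dz}{G(z)}.
\]
Telescoping over the asymptotic regime bounds its total time contribution by $2\int_{\alpha_h}^\infty dz/G(z)$.

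The main obstacle will be the factor-$2$ comparison in regime (b): it requires quantifying how \eqref{time_step} constrains the jump $K_h(u^{n+1}_h)-K_h(u^n_h)=\Delta t^n K_h(\phi^n_h)$ (using \eqref{ineq_Kh_u}) relative to $K_h(u^n_h)$. Concretely, from $\Delta t^n\leq h^{1+\sigma}/\|u^n_h\|_\infty^{1+\nu}$ and $K_h(u^n_h)\leq \|u^n_h\|_\infty$, the ratio $K_h(u^{n+1}_h)/K_h(u^n_h)$ is controlled by $h^{1+\sigma}\, K_h(\phi^n_h)/K_h(u^n_h)^{2+\nu}$, which, combined with the bootstrap estimate $K_h(\phi^n_h)^2 \sim G(K_h(u^n_h))^2 \sim K_h(u^n_h)^{p+1}$, can be made arbitrarily small on the range $K_h(u^n_h)\geq M_0$ for $h$ sufficiently small. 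Adding the contributions from (a) and (b) gives $T(h)\leq 2\big(\int_{\alpha_h}^\infty dz/G(z)+Ch\big)$. Since the integral is finite (as $p>1$), we get $T(h)<\infty$, hence by Lemma \ref{rmq_Kh_L1} the numerical solution $(u_h^n)_n$ blows up in the finite time $T(h)$.
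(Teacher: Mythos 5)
Your starting point is correct and is exactly the route the paper relies on (the paper itself gives no argument here, only the citation to Cho's Lemma 5.3): square-rooting Proposition \ref{K_h} gives $\Delta t^n \leq \big(K_h(u^{n+1}_h)-K_h(u^n_h)\big)/G(K_h(u^n_h))$, and the lemma follows once this left-endpoint Riemann sum is compared to $\int_{\alpha_h}^\infty dz/G(z)$. But both of your regime estimates have gaps. In regime (a), the number of indices with $K_h(u^n_h)<M_0$ is \emph{not} uniformly bounded in $h$: the increments satisfy $K_h(u^{n+1}_h)-K_h(u^n_h)=\Delta t^n K_h(\phi^n_h)$ with $\Delta t^n\leq h^{1+\sigma}$, so the number of steps needed to raise $K_h$ from $\alpha_h$ to a fixed $M_0$ typically grows like $h^{-(1+\sigma)}$ as $h\to0$; monotonicity and unboundedness of $(K_h(u^n_h))_n$ say nothing about how many terms sit below a fixed threshold. (This part is repairable: the transient contribution should be absorbed into $2\int_{\alpha_h}^{M_0}dz/G(z)$ rather than into the $O(h)$ term.) The more serious gap is in regime (b): your ``bootstrap estimate'' $K_h(\phi^n_h)^2\sim G(K_h(u^n_h))^2$ is only available as a one-sided bound --- Proposition \ref{K_h} gives $K_h(\phi^n_h)\geq G(K_h(u^n_h))$, and nothing in the paper provides a matching upper bound, because $K_h$ is an $L^1$-type mean that does not control $K_h(\phi^n_h)$ from above in terms of $K_h(u^n_h)$, and the time step \eqref{time_step} is not adapted to $\phi^n_h$ at all. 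Without such an upper bound you cannot conclude that the relative jump $K_h(u^{n+1}_h)/K_h(u^n_h)$ is close to $1$ on $\{K_h(u^n_h)\geq M_0\}$, so the factor-$2$ comparison is not established as written.

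The standard repair (and, up to notation, what Cho's cited lemma does) is to split the indices by the size of the \emph{jump}, not by the size of $K_h(u^n_h)$. Set $c=4^{1/(p+1)}$. If $K_h(u^{n+1}_h)\leq c\,K_h(u^n_h)$, then $G(K_h(u^{n+1}_h))^2\leq c^{p+1}G(K_h(u^n_h))^2$, i.e.\ $G(K_h(u^{n+1}_h))\leq 2G(K_h(u^n_h))$, and your factor-$2$ comparison goes through with no information on $\phi^n_h$ whatsoever; summing these steps over the disjoint intervals $[K_h(u^n_h),K_h(u^{n+1}_h)]\subset[\alpha_h,\infty)$ gives at most $2\int_{\alpha_h}^\infty dz/G(z)$. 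If instead $K_h(u^{n+1}_h)>c\,K_h(u^n_h)$, then along such indices $K_h(u^n_h)$ grows at least geometrically from $\alpha_h$, and since $K_h(u^n_h)\leq C\|u^n_h\|_\infty$ forces $\Delta t^n\leq h^{1+\sigma}\min\big(1,\,C'(K_h(u^n_h))^{-(1+\nu)}\big)$ by \eqref{time_step}, the total time spent on these steps is bounded by $h^{1+\sigma}$ times a convergent geometric series, i.e.\ $O(h)$ (uniformly once $\alpha_h$ is bounded away from $0$, which holds here since $u_0>0$). This recovers the stated bound; finiteness of the integral (as $p>1$) together with Lemma \ref{rmq_Kh_L1} then gives blow-up in the finite time $T(h)$, as you conclude.
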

\begin{proof}
See \cite[Lemma 5.3]{Cho}
\end{proof}
\begin{thm}\label{thm_of_blowup}
Let $(u,\phi)$ and $(u_h,\phi_h)$ be the solutions of \eqref{eq2} and \eqref{disct_for_v} respectively. Assume that $u_0 > 0$ and $u_1>0$ are large enough and $\phi_0 >0$. If $u_h$ weakly converges towards $u$, then $u^n_h$ blows up in finite time $T(h)$ and 
\begin{align}\label{T_h}
\lim_{h\to 0} T(h) = T_\infty.
\end{align}
\end{thm}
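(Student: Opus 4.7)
The plan is to establish the two inequalities $\liminf_{h\to 0}T(h)\geq T_\infty$ and $\limsup_{h\to 0}T(h)\leq T_\infty$ separately; finiteness of $T(h)$ is already guaranteed by Lemma \ref{lemcho} combined with Lemma \ref{rmq_Kh_L1}.

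\textbf{Lower bound.} For any $T^*<T_\infty$, the solution $u$ remains in $L^\infty(\Omega\times[0,T^*])$ by definition of the blow-up time. Suppose for contradiction that $T(h_k)\leq T^*$ along some sequence $h_k\to 0$; then $\|u_{h_k}^n\|_\infty\to\infty$ as $t^n\to T(h_k)\leq T^*$. Combined with the weak convergence $u_h\rightharpoonup u$ on $\Omega\times[0,T^*]$ and the local stability estimate of Theorem \ref{thm_stab}, this contradicts the boundedness of $u$ on the same set. Hence $T(h)>T^*$ for $h$ small enough, so $\liminf_{h\to 0}T(h)\geq T^*$, and letting $T^*\uparrow T_\infty$ closes this half.

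\textbf{Upper bound.} The key observation is that $K(u(t))\to\infty$ as $t\to T_\infty^-$. Indeed, the continuous analog of Proposition \ref{K_h}, obtained by integrating the equation in $x$ (which kills $u_{xx}$ by periodicity) and applying Jensen to $K(|u|^p)\geq K(u)^p$, yields $K(u)''\geq K(u)^p$ with $K(u)'(0)=K(u_1)>0$; a standard energy argument then forces $K(u(t))\to\infty$ precisely at $T_\infty$. Now fix $\varepsilon>0$. Choose $M$ large enough that
\[
2\int_{M}^{\infty}\frac{dz}{G(z)}<\frac{\varepsilon}{2},
\]
which is possible since $G(z)\sim\sqrt{\lambda/(p+1)}\,z^{(p+1)/2}$ and $p>1$. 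Next choose $T^*<T_\infty$ satisfying $T_\infty-T^*<\varepsilon/4$ and $K(u(T^*))>2M$. The weak convergence of $u_h$ to $u$ (applied to the test function $1$) together with the uniform $L^\infty$-control of Theorem \ref{thm_stab} on $[0,T^*]$ provides, for every sufficiently small $h$, an index $n^*=n^*(h)$ such that $|t^{n^*}-T^*|<\varepsilon/4$ and $K_h(u_h^{n^*})\geq M$. Rerunning the proof of \cite[Lemma 5.3]{Cho} with $n^*$ in place of the initial index $0$ (only the telescoping over $\Delta t^n$ is shifted), one obtains
\[
T(h)-t^{n^*}\leq 2\left(\int_{K_h(u_h^{n^*})}^{\infty}\frac{dz}{G(z)}+Ch\right)\leq \frac{\varepsilon}{2}+2Ch.
\]
Combining with $t^{n^*}\leq T^*+\varepsilon/4\leq T_\infty+\varepsilon/2$ yields $T(h)\leq T_\infty+\varepsilon$ for all $h$ small, closing the upper bound.

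\textbf{Main obstacle.} The delicate point is converting the qualitative hypothesis $u_h\rightharpoonup u$ into the quantitative statement $K_h(u_h^{n^*(h)})\geq M$ at a discrete time $t^{n^*(h)}$ close to the chosen $T^*<T_\infty$. This step requires that weak convergence be interpreted in a topology that allows testing against the constant function $1$ on $\Omega$, and crucially leverages the uniform $L^\infty$-stability of Theorem \ref{thm_stab} on $[0,T^*]$ (itself coming from the discrete maximum principle of Proposition \ref{prop_stability}) to pass to the limit in $K_h(u_h^{n})$ at the fixed time $T^*<T_\infty$. The remaining ODE-type comparison on the scalar sequence $K_h(u_h^n)$, as well as the shifted version of \cite[Lemma 5.3]{Cho}, is essentially routine.
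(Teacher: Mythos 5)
Your overall architecture (split into $\liminf$ and $\limsup$, contradiction via the unboundedness of $K_h(u_h^n)$ from Proposition \ref{K_h}, and a shifted application of Lemma \ref{lemcho} starting from a discrete time where $K_h$ is already large) is the same as the paper's. But there is a genuine gap in your upper-bound step, at the point where you claim that $K(u(t))\to\infty$ as $t\to T_\infty^-$ because ``the continuous analog of Proposition \ref{K_h}'' gives $K(u)''\geq K(u)^p$ and ``a standard energy argument then forces $K(u(t))\to\infty$ precisely at $T_\infty$.'' The concavity/ODE comparison only shows that $K(u)$ would blow up at some finite time $T_K$, and hence that the solution cannot exist globally; it gives $T_\infty\leq T_K$ but not $T_K=T_\infty$. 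Since $K(u(t))\leq\|u(t)\|_{L^\infty}$, it is entirely consistent with the ODE inequality that the $L^\infty$ norm blows up at $T_\infty$ (say, at a single point) while the spatial mean $K(u(t))$ stays bounded on $[0,T_\infty)$ — in which case you cannot choose $T^*<T_\infty$ with $K(u(T^*))>2M$ for arbitrarily large $M$, and your upper bound collapses. The paper closes exactly this hole by invoking Glassey's theorem (\cite[Theorem 2.1 and its Corollary]{Gla73}): for sufficiently large initial data the solution blows up in every $L^p$ norm, $1\leq p\leq\infty$, hence in particular $K(u(t))\geq 2R$ for $t$ near $T_\infty$. This is precisely why the theorem hypothesizes that $u_0$ and $u_1$ are ``large enough'' — a hypothesis your argument never uses. (The paper also only needs $K(u(t))\geq 2R$ on some interval $[t',T_\infty)$, not full divergence of $K$.)

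A secondary, more easily repaired looseness: in your lower bound you derive the contradiction from ``$\|u_{h_k}^n\|_\infty\to\infty$'' against the boundedness of $u$ on $[0,T^*]$, but weak convergence does not control the $L^\infty$ norms of the approximating sequence (concentrating profiles can converge weakly to a bounded limit). The contradiction must be run through $K_h(u_h^n)$, which weak convergence \emph{does} control (test against the constant $1$) and which Proposition \ref{K_h} guarantees is unbounded along $n$; this is how the paper argues, and your own ``main obstacle'' paragraph shows you have the right mechanism in mind — it just is not what the lower-bound paragraph actually says.
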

\begin{proof} We follow the strategy of \cite{Sasaki}. According to Lemma \ref{lemcho}, $u^n_h$ blows up in finite time $T(h)$. To establish \eqref{T_h}, we will prove the following inequalities:
\begin{equation}\label{T_inf}
T_\infty\leq \liminf_{h\to 0}T(h) = T_*,
\end{equation}
\begin{equation}\label{T_sup}
T_\infty \geq \limsup_{h\to 0} T(h) = T^*.
\end{equation}
Suppose that $T_*<T_\infty$ and let $\varepsilon = \frac{T_\infty-T_*}{2} > 0$. Then there exists $h_\varepsilon>0$ sufficiently small such that
$$T(h_\varepsilon)\leq T_* +\varepsilon < T_\infty.$$
On one hand, we have $\sup_{0\leq t\leq T_* +\varepsilon}\Vert u(\cdot,t)\Vert_{L^\infty(a,b)} < \infty$ and hence $$K_1 := \sup_{0\leq t\leq T_* +\varepsilon} K(u(t)) < \infty.$$
On the other hand, if $u_h^n \xrightharpoonup[h\to 0]{} u(t^n)$ then $K_h(u_h^n)\xrightarrow[h\to 0]{}  K(u(t^n))$. Hence, if $h_\varepsilon$ is sufficiently small, then $K_{h_\varepsilon}(u_{h_\varepsilon}^n) \leq K(u(t^n)) + \varepsilon$ for all $n$ such that $t^n < T_\infty$. It follows
\begin{align*}
\lim_{n\to \infty}K_{h_\varepsilon}(u_{h_\varepsilon}^n) & = \lim_{t^n\to T(h_\varepsilon)}K_{h_\varepsilon}(u_{h_\varepsilon}^n)\\
& \leq \lim_{t^n\to T(h_\varepsilon)} K(u(t^n)) + \varepsilon \\
& \leq K_1+\varepsilon,
\end{align*}
which contradicts Proposition \ref{K_h}, and hence \eqref{T_inf} holds. Next, suppose that $ T^* > T_\infty$ and let $N>0$ be the number of iterations to reach the time $T_\infty$, i.e. $T_\infty = t^N = \sum_{n=0}^{N-1} \Delta t^n$. Let $h_1 = \min(h_N,h_{N,\Lambda})$ with $h_N$ given in Proposition \ref{prop_stability} and $h_{N,\Lambda}$ given in Theorem \ref{thm_stab}, with $\Lambda_\infty = \|u_h^{0}\|_\infty + \|\phi_h^{0}\|_\infty$. Then $\forall\ h\in (0,h_1]$ and $0\leq n \leq N$ we have
\begin{align*}
\left|\sum_{i=1}^I\int_{K_i} u_h^{i,n}(x)\,dx\right| & \leq \sum_{i=1}^I\sum_{j=1}^{k+1} |u_j^{i,n}|\left|\int_{K_i}\varphi^i_j(x)\,dx\right| \\
& \leq \sum_{i=1}^I\frac{h}{2} \|u_h^{n}\|_\infty \sum_{j=1}^{k+1}\left|\int_{-1}^1\varphi_j(x)\,dx\right| \\
& \leq K_2\Lambda_\infty.
\end{align*}
Let $\varepsilon = \frac{T^* - T_\infty}{4}$. Using Lemma \ref{lemcho}, there exist $h_\varepsilon>0$ (which will be fixed later) and $R\geq \frac{1}{b-a}\left(\|u\|_{L^\infty([a,b]\times [0,t^{N-1}])} + K_2\Lambda_\infty\right)$ such that
\begin{equation}\label{h**<eps}
\int_{R}^\infty \frac{dz}{G(z)}+C h_\varepsilon <\frac{\varepsilon}{2}.
\end{equation} 
It is shown in \cite{Gla73} that if the initial conditions are sufficiently large, then the solution $u$ of \eqref{eq2} blows up in $L^p$ norms, for any $1\leq p \leq \infty$ (see \cite[Theorem 2.1 and its Corollary]{Gla73}. We deduce that if the initial conditions are large enough, then there exists $t' = t'_R<T_\infty$ such that 
\begin{equation}\label{Ku>2R}
K(u(t))\geq 2R\quad \forall\, t\in [t',T_\infty).
\end{equation}
Set 
\begin{equation*}
T = t' + \frac{T_\infty - t'}{2} = \frac{t' + T_\infty}{2}<T_\infty,
\end{equation*}  
\begin{equation*}
h_* = \min\left\{ h_1,\, \left(\frac{T_\infty - t'}{2}\right)^\frac{1}{1+\sigma}\right\} 
\end{equation*} 
and let $h\in(0,h_*]$. Then we have for all $n\geq 0$ such that $t^n < T_\infty$
\begin{align*}
\vert K(u(t^n)) - K_h(u^n_h)\vert & = \frac{1}{b-a}\left|\sum_{i=1}^I\int_{K_i} u(x,t^n) - u^{i,n}_h(x) dx \right|\\
&\leq \frac{1}{b-a}\left( \|u(t^n)\|_{L^\infty([a,b])} +  \left|\sum_{i=1}^I\int_{K_i} u^{i,n}_h(x)dx\right| \right).
\end{align*}
In particular, we obtain for all $0\leq n \leq N-1$ 
$$ \vert K(u(t^n)) - K_h(u^n_h)\vert \leq \frac{1}{b-a}\left( \|u\|_{L^\infty([a,b]\times [0,t^{N-1}])} + K_2\Lambda_\infty \right) \leq R.$$
It follows
\begin{align*}
K_h(u^n_h)\geq K(u(t^n)) - R \quad \forall\, 0\leq n\leq N-1.
\end{align*}
Recall that $\Delta t^n \leq h^{1+\sigma} \leq T-t'<T_\infty-t'$. Since $T^*>T_\infty$, then there exists $n_1\leq N-1$ such that $t'\leq t^{n_1}<T_\infty$. We deduce from \eqref{Ku>2R}
\begin{align}\label{diff_k_R}
K_h(u^{n_1}_h)\geq K(u(t^{n_1}))-R \geq R.
\end{align}
Now, using $\dis{\limsup_{h\to 0}T(h)} = T^* > T_\infty$, one may choose $h_\varepsilon \leq h_*$ sufficiently small such that 
\begin{align*}
T(h_\varepsilon) \geq T_\infty +\varepsilon.
\end{align*}
However, in view of Lemma \ref{lemcho} and equations \eqref{diff_k_R} and \eqref{h**<eps}, we have 
\begin{align*}
T(h_\varepsilon) = t^{n_1} + \sum_{n=n_1}^\infty \Delta t^n & < T_\infty + 2\left( \int_{K_{h_\varepsilon}(u^{n_1}_{h_\varepsilon})}^\infty \frac{dz}{G(z)} + Ch_\varepsilon\right)  \\
& \leq T_\infty + 2\left( \int_R^\infty \frac{dz}{G(z)} + C h_\varepsilon\right) \\
& < T_\infty + \varepsilon,
\end{align*}
which is a contradiction. This achieves the proof.
\end{proof}
%
%
\section{Numerical examples}\label{Sec_exmp}
In this section, we present some numerical tests in order to illustrate our method.  For all the examples, we consider the DG scheme \eqref{disct_scheme} with $\IP_1$ approximation. The simulations have been performed using the software Matlab.

\begin{exmp}\label{exemple1}
In this example, we consider constant initial conditions so that the solution of \eqref{eqq1} is space independent. The exact solution we consider is $$u(t) = \mu (T-t)^\frac{2}{1-p} $$ with $\mu = \left(2\frac{p+1}{(p-1)^2}\right)^\frac{1}{p-1}$. We perform two test cases with $p=2$ and $p=3$. The blow-up time for both cases is set to $T=0.1\, s$. Figure \ref{fig_edo} shows a comparison between the exact solution and the numerical solution functions of the time. One can notice a very good superposition of the solutions (with relative errors less than $1\%$ in both $L^2$ and $L^\infty$ norms), which justifies the validity of the explicit Euler scheme as an appropriate choice for the time discretization of the DG method.

\begin{figure}
\begin{center}
\includegraphics[width=5.5cm,height=5cm]{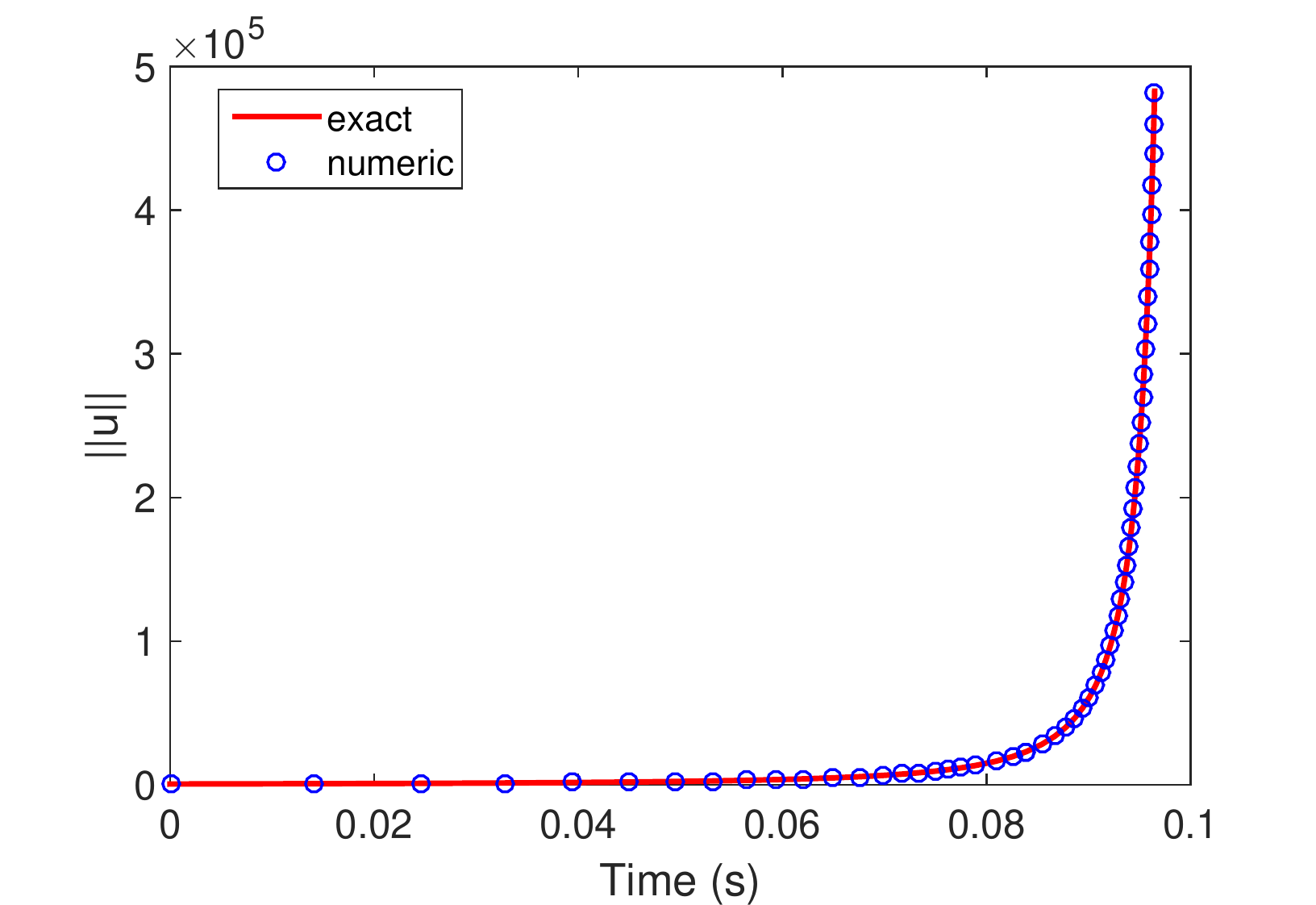} \includegraphics[width=5.5cm,height=5cm]{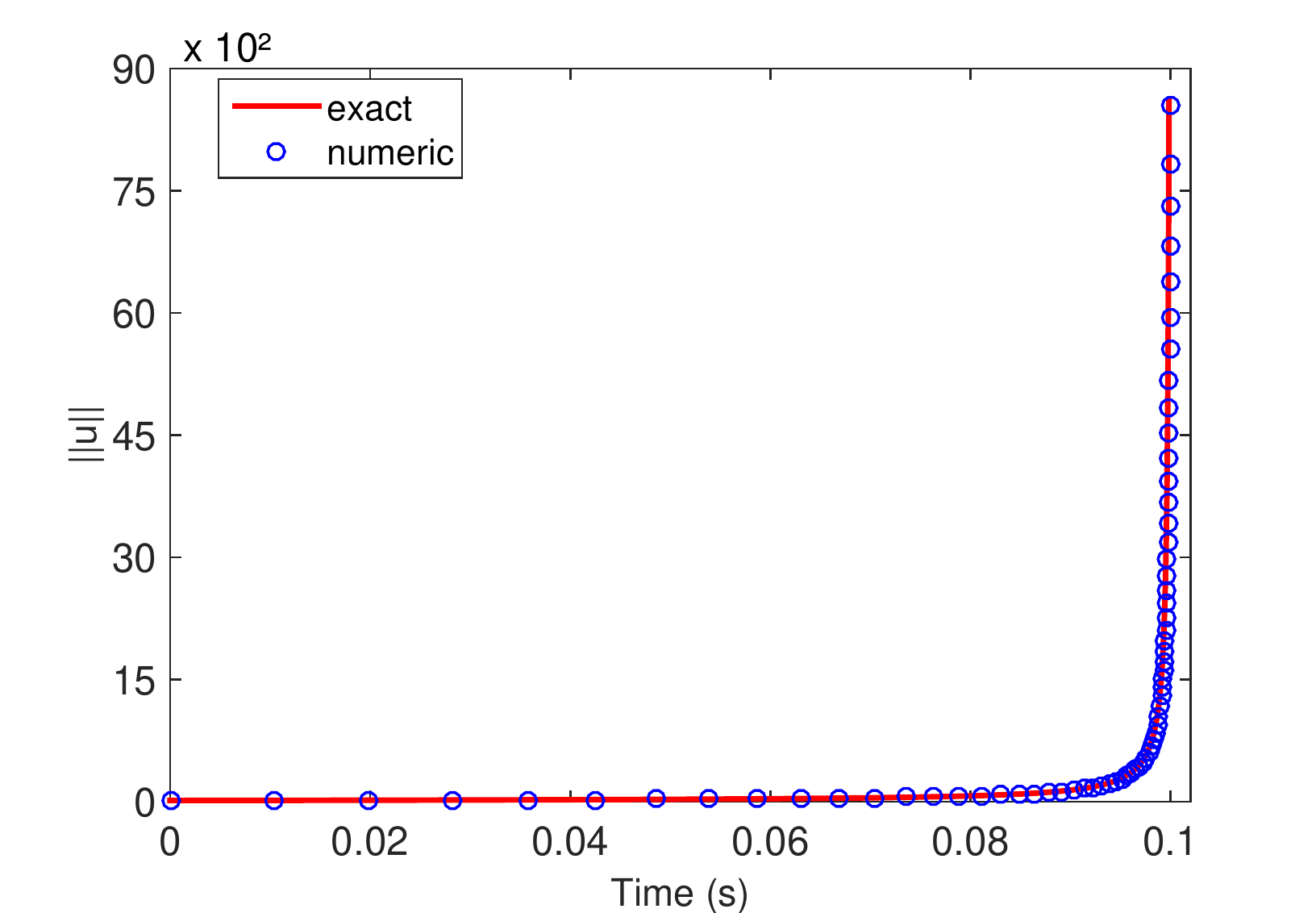}
\caption{\label{fig_edo} Comparison between the numerical solution (blue circles) and the exact solution (red line) for p = 2 (left) and p=3 (right).}
\end{center}
\end{figure}
\end{exmp}

\begin{exmp}\label{exemple1bis}
We consider an exact solution of \eqref{eqq1} given by 
\begin{equation}\label{sol_ana}
u(x,t) = \mu (T-t+d\,x)^\frac{2}{1-p}
\end{equation}
with $\mu = \left(2(1-d^2)\frac{p+1}{(p-1)^2}\right)^\frac{1}{p-1}$ and $d \in (0,1)$ is an arbitrary parameter. Figures \ref{fig_edp1} and \ref{fig_edp2} show a comparison between the exact solution and the numerical solution at various times, for $p=2$ and $p=3$ respectively. The parameters used are $T=0.5\,s$ and $d=0.01$. One can notice that the numerical solutions fit very well with the exact solutions at all the recorded times. The relative errors in $L^\infty$ norms is less than $1\%$ if a refined mesh is used. We also investigate the blow-up curve in the following way. Let $R\geq \min_{x\in [0,1]} u(x,0) = \mu (T+d)^\frac{2}{1-p}$, and let $\xi_R$ the function defined by $u(x,\xi_R(x))=R$. It is easy to show from \eqref{sol_ana} that $\xi_R$ is a straight line given by $\xi_R(x) = T-\left(\frac{\mu}{R}\right)^{\frac{p-1}{2}} + d\,x$. When $R$ goes to infinity, $\xi_R(x)$ tends to the blow-up time $T_\infty(x) = T+d\,x$, for any $x\in [0,1]$. Thus, one can approximate numerically the blow-up curve $T_\infty$ by computing $\xi_R$ for large values of $R$. 
 In practice, we define $\xi_R$ as $$\xi_R(x)= \inf\{t\geq 0,\ |u(x,t)|\geq R\}.$$
Figure \ref{fig_BLup} shows $\xi_R$ function of $x$ for various values of $R$. We notice that  $\xi_R$ is a straight line with slope equal to $d$ for all values of $R$, which is in accordance with the theory. Furthermore, as the parameter $R$ gets bigger, one can notice that $\xi_R$ gets closer to the theoretical blow-up curve $T_\infty$. 

\begin{figure}
\begin{center}
\includegraphics[width=5.5cm,height=5cm]{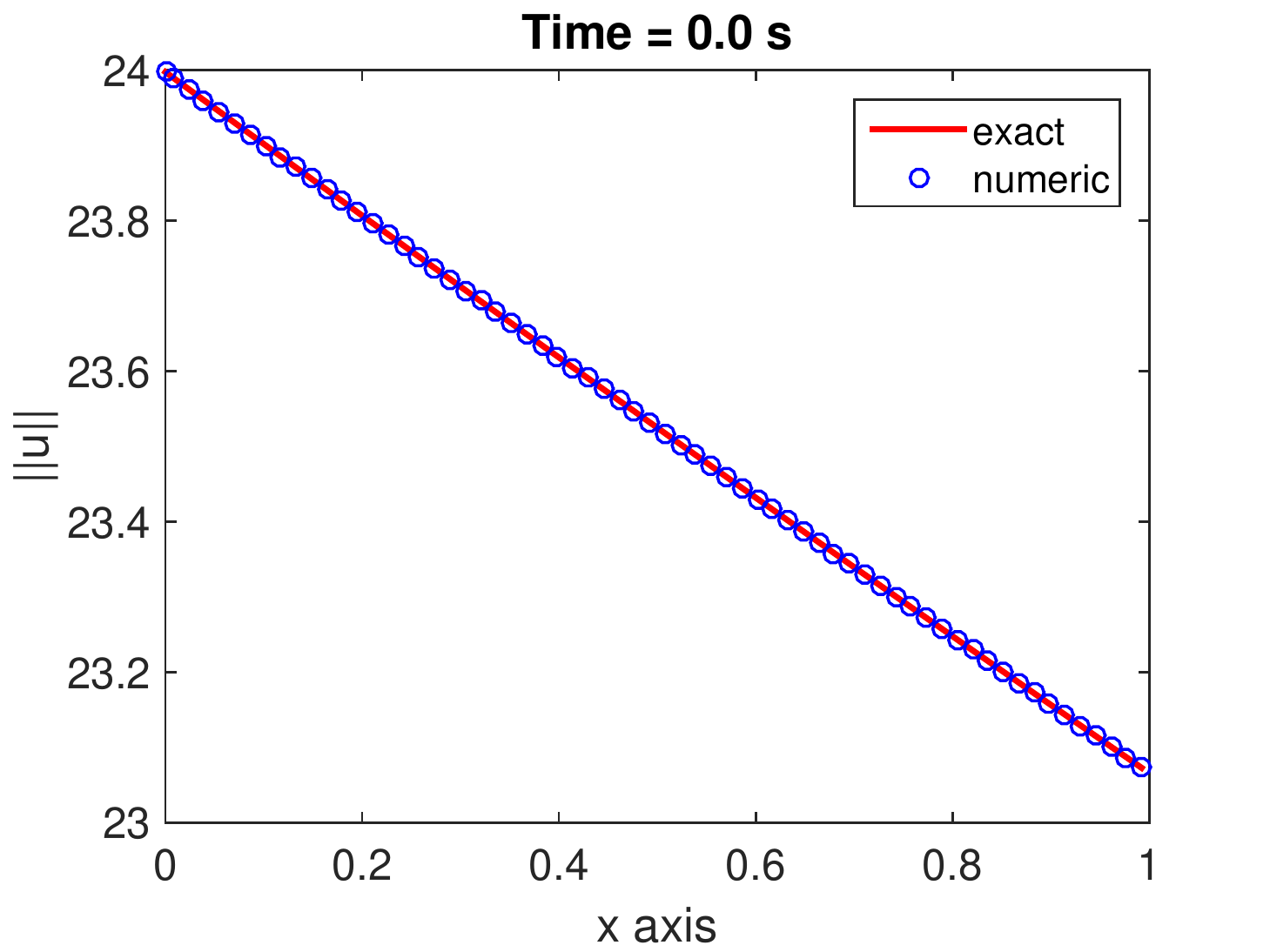} \includegraphics[width=5.5cm,height=5cm]{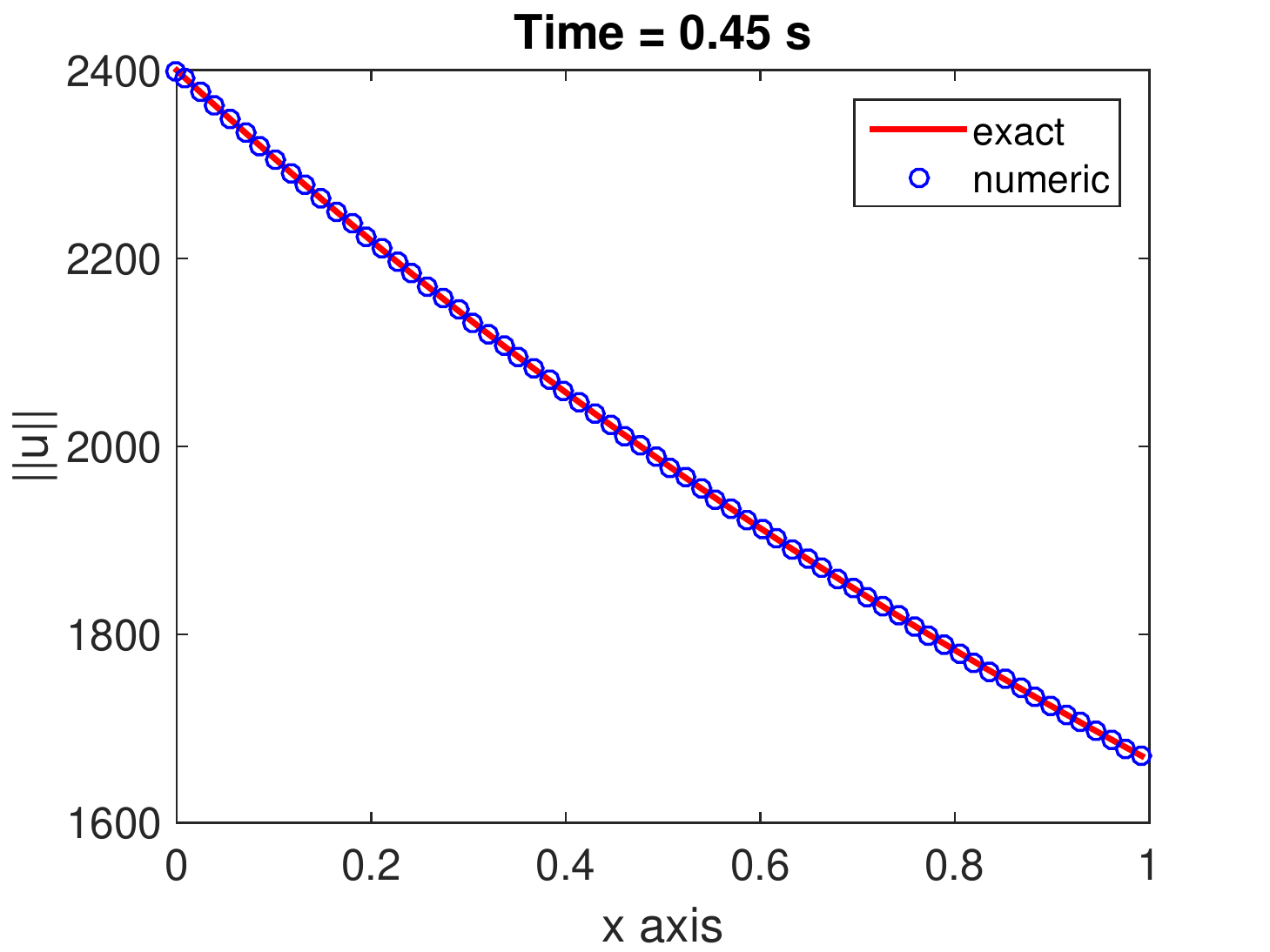}
\includegraphics[width=5.5cm,height=5cm]{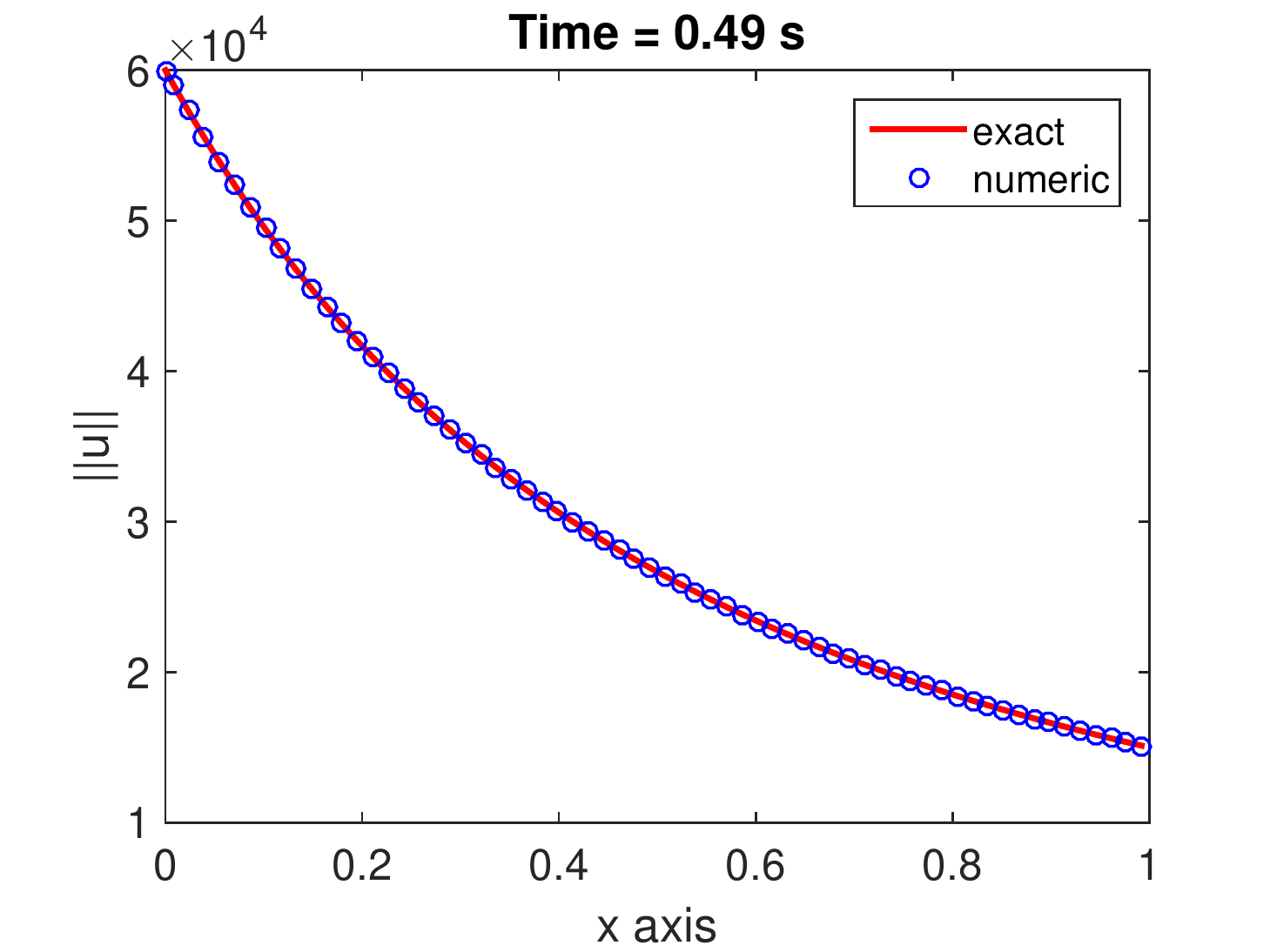} \includegraphics[width=5.5cm,height=5cm]{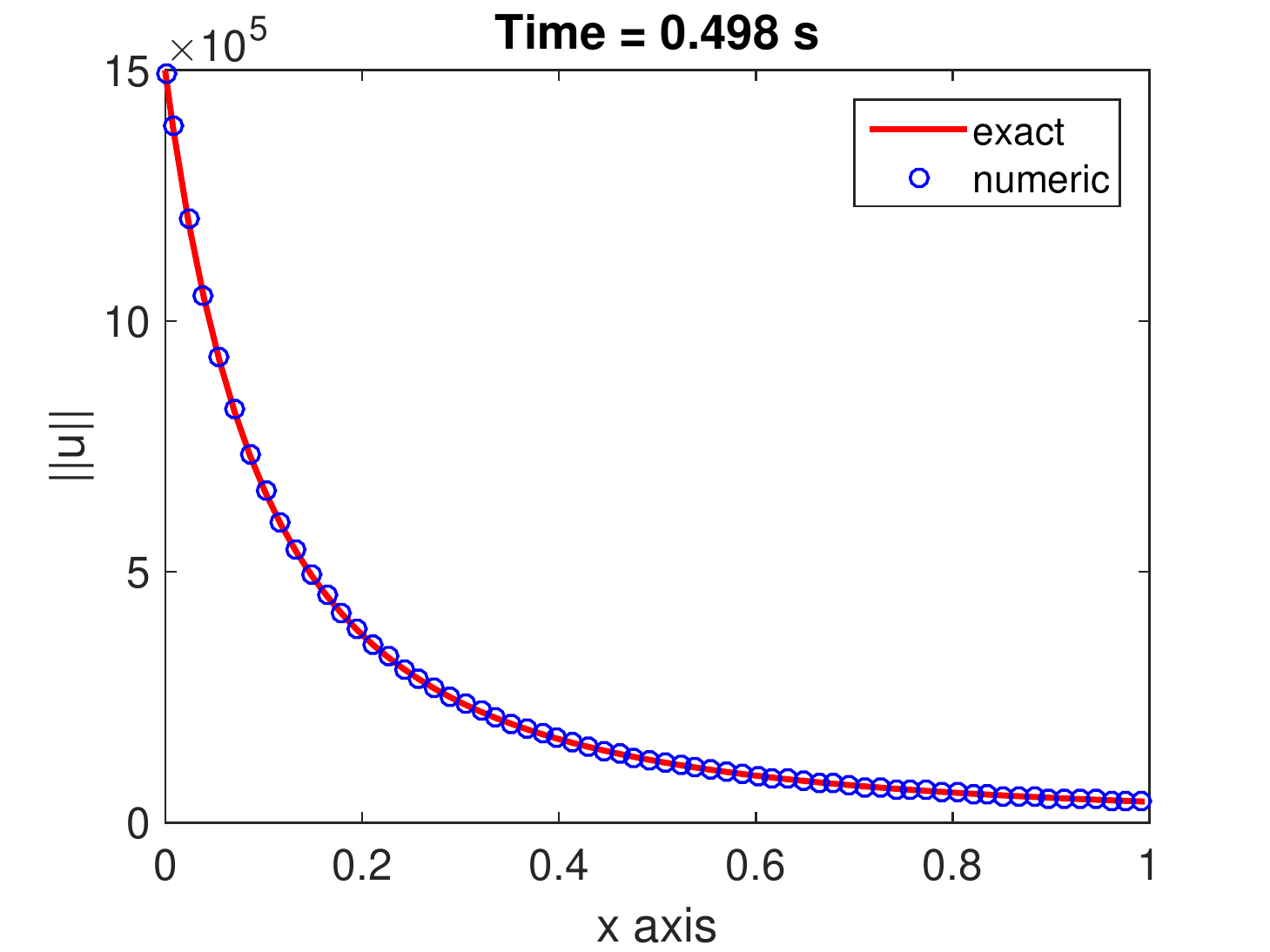}
\caption{\label{fig_edp1} Comparison between the numerical solution (blue circles) and the exact solution (red line) at various times. Case $p=2$.}
\end{center}
\end{figure}

\begin{figure}
\begin{center}
\includegraphics[width=5.5cm,height=5.cm]{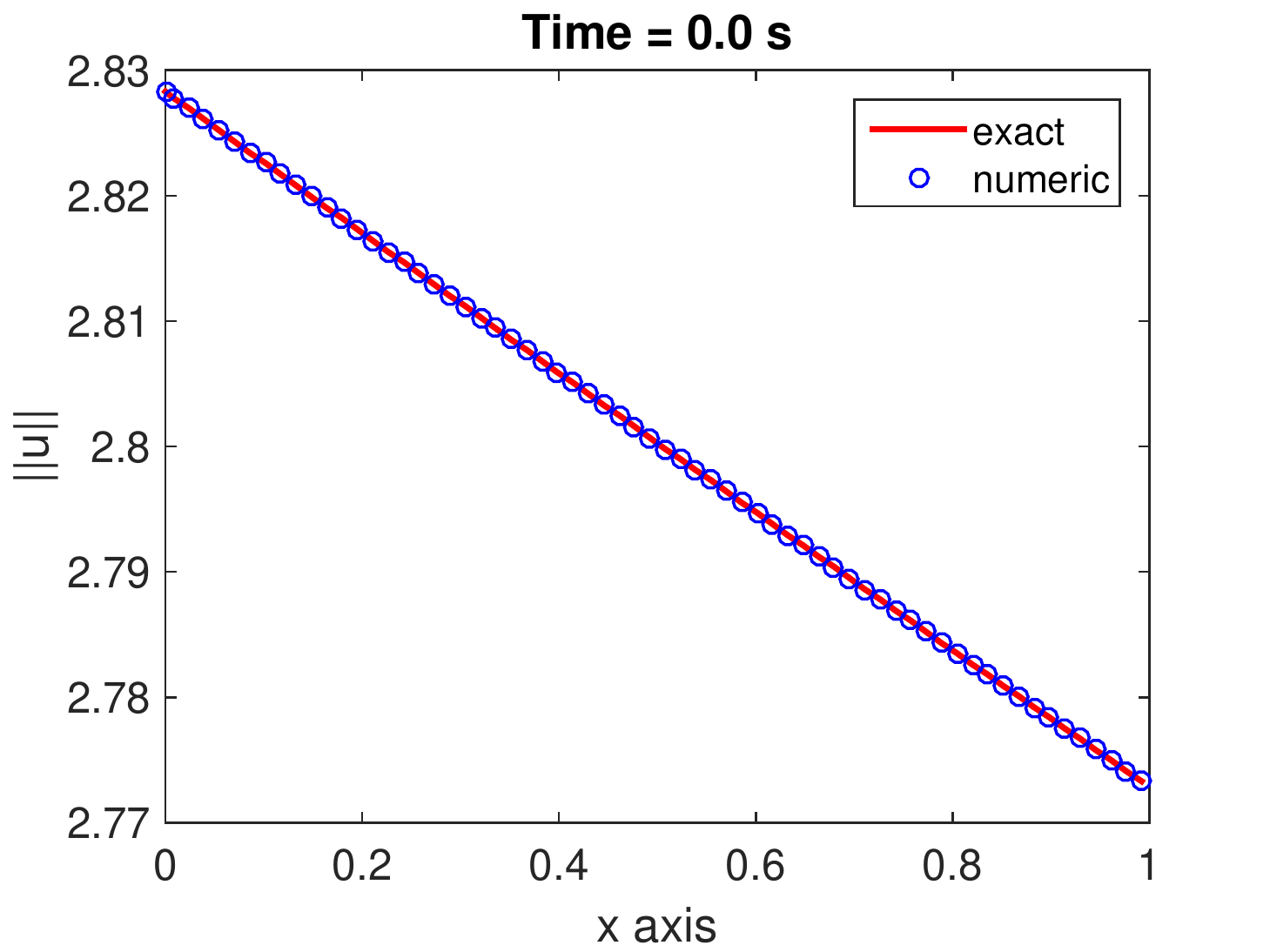} \includegraphics[width=5.5cm,height=5.cm]{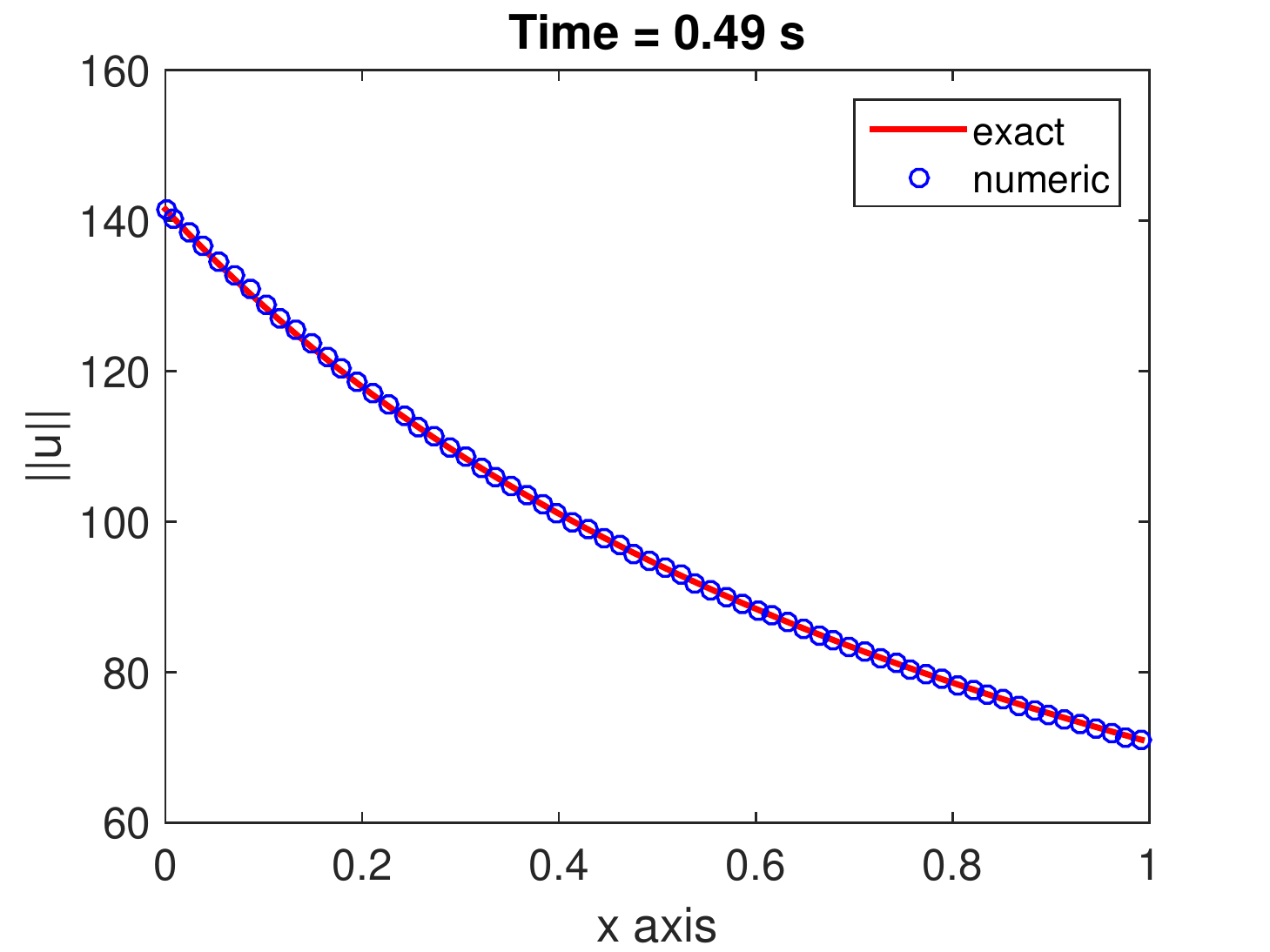} 
\includegraphics[width=5.5cm,height=5.cm]{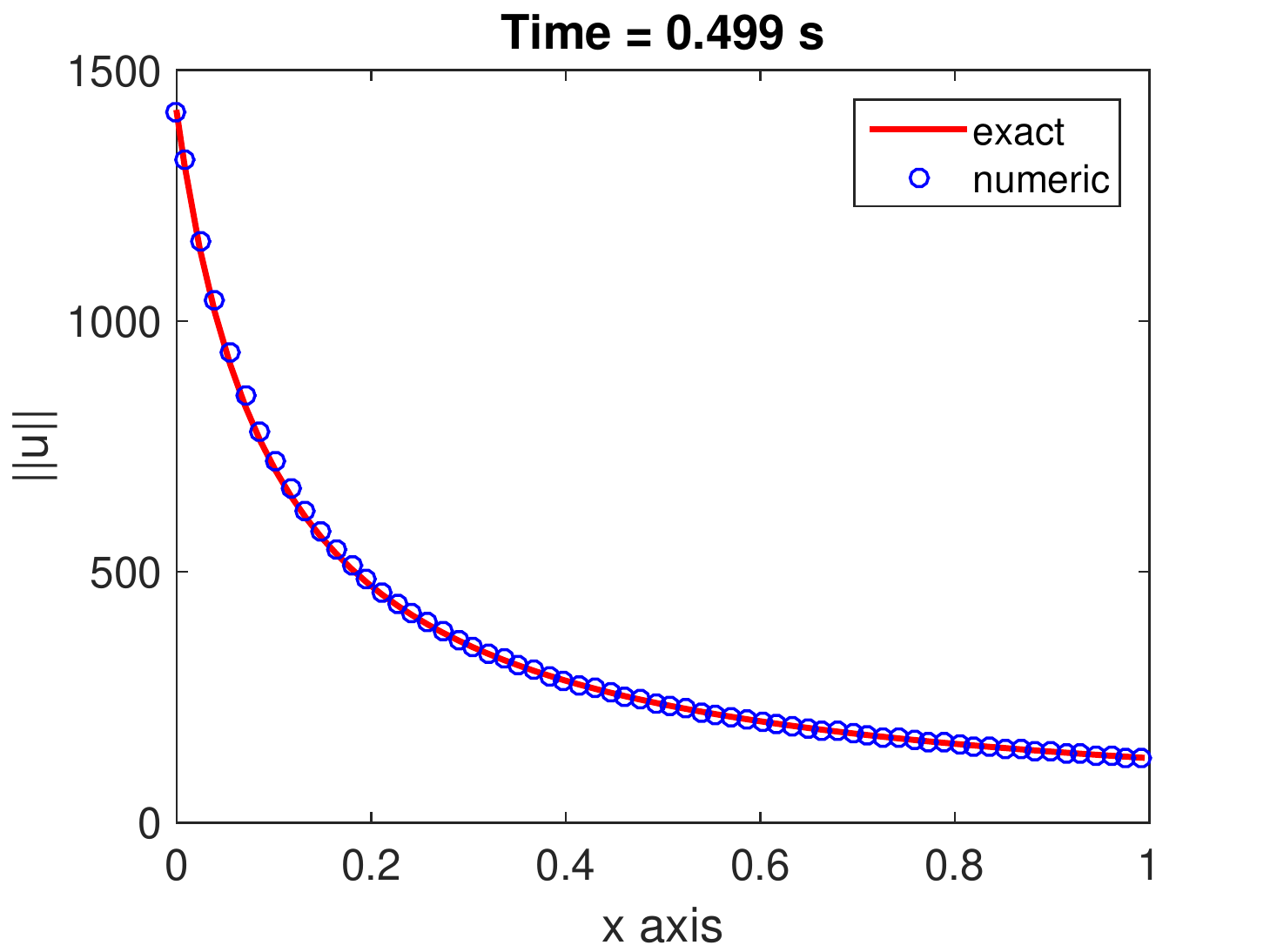} \includegraphics[width=5.5cm,height=5.cm]{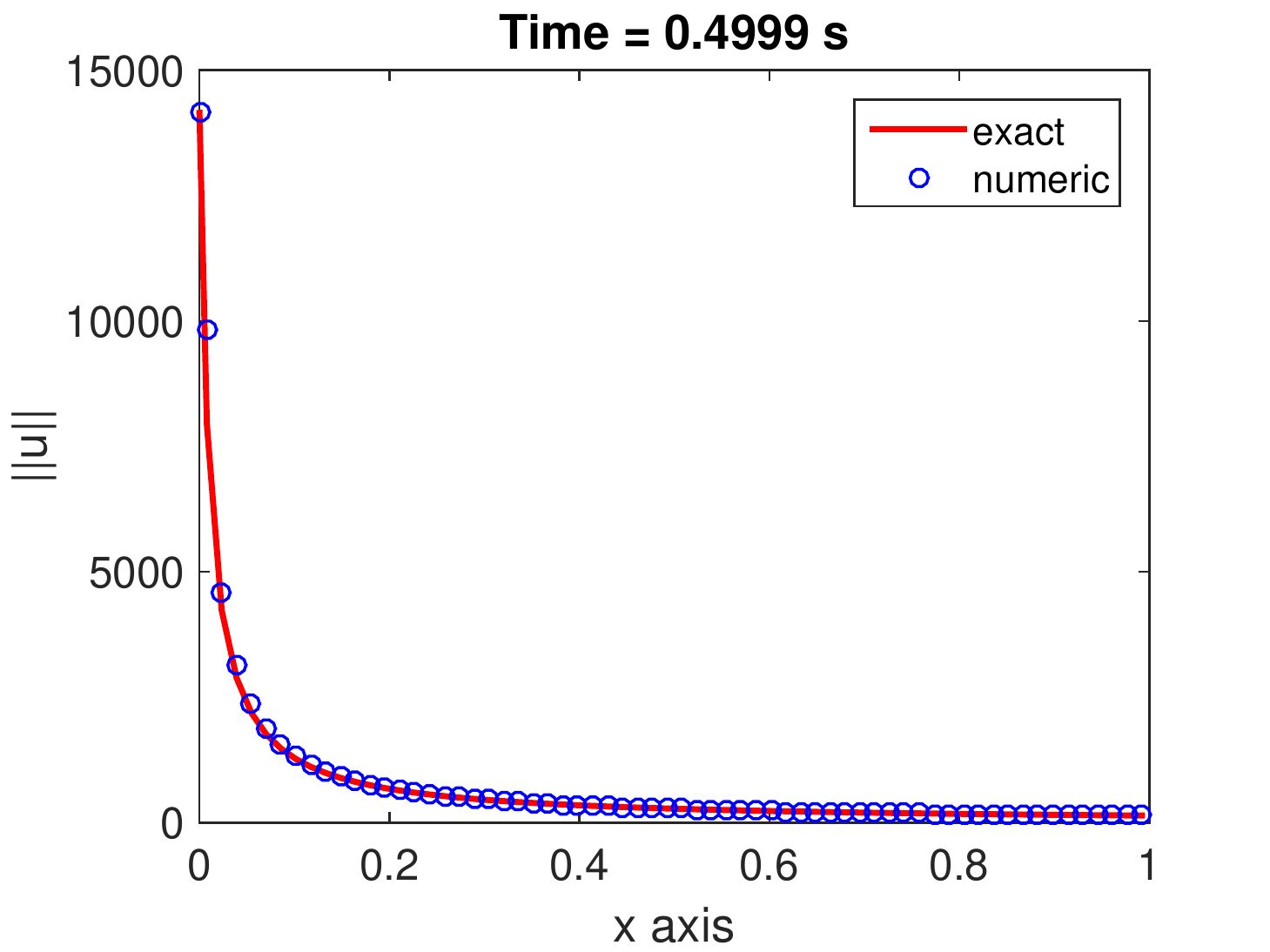}
\caption{\label{fig_edp2} Comparison between the numerical solution (blue circles) and the exact solution (red line) at various times. Case $p=3$.}
\end{center}
\end{figure}

\begin{figure}
\begin{center}
\includegraphics[width=7cm,height=6cm]{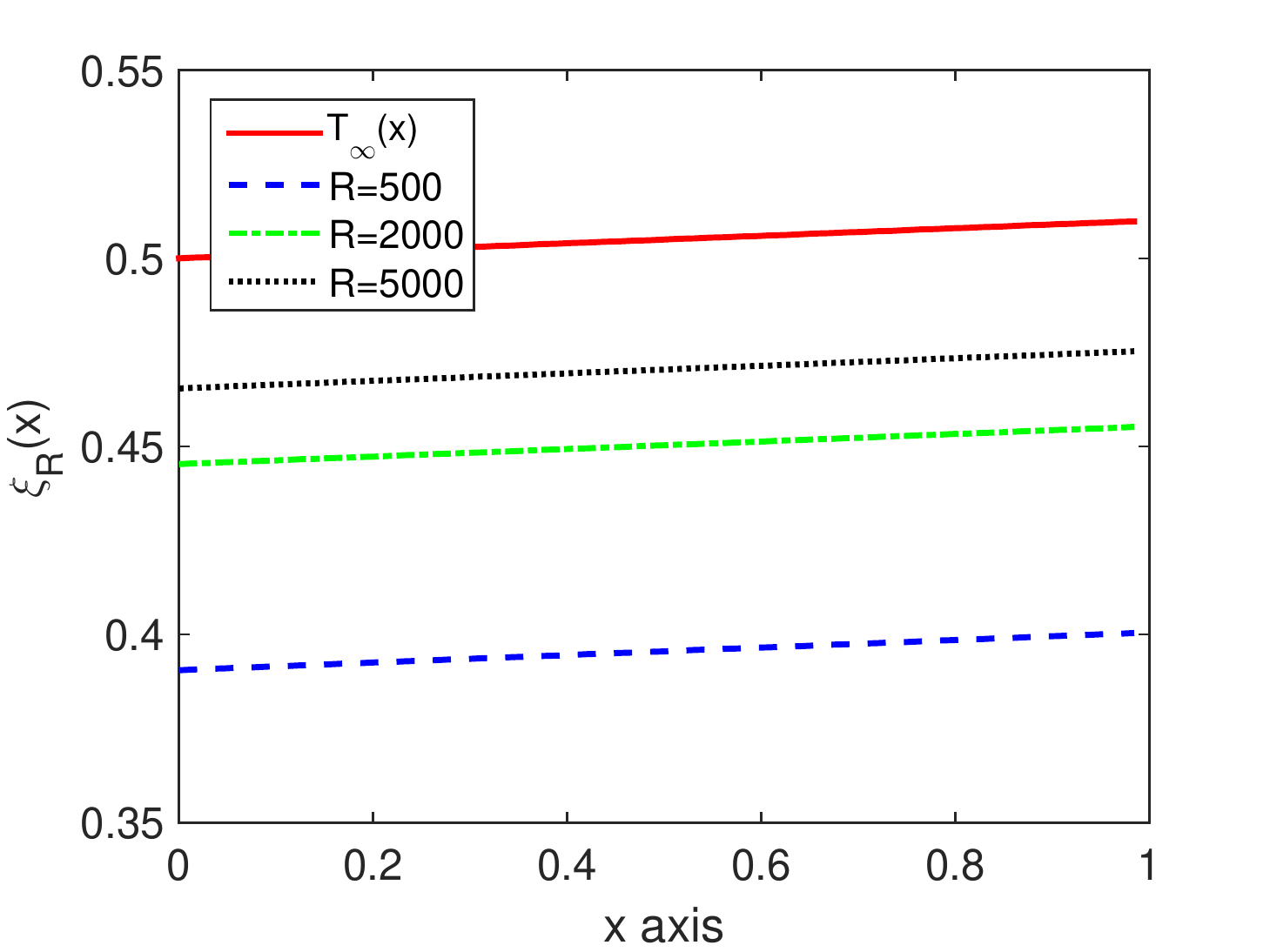}\includegraphics[width=7cm,height=6cm]{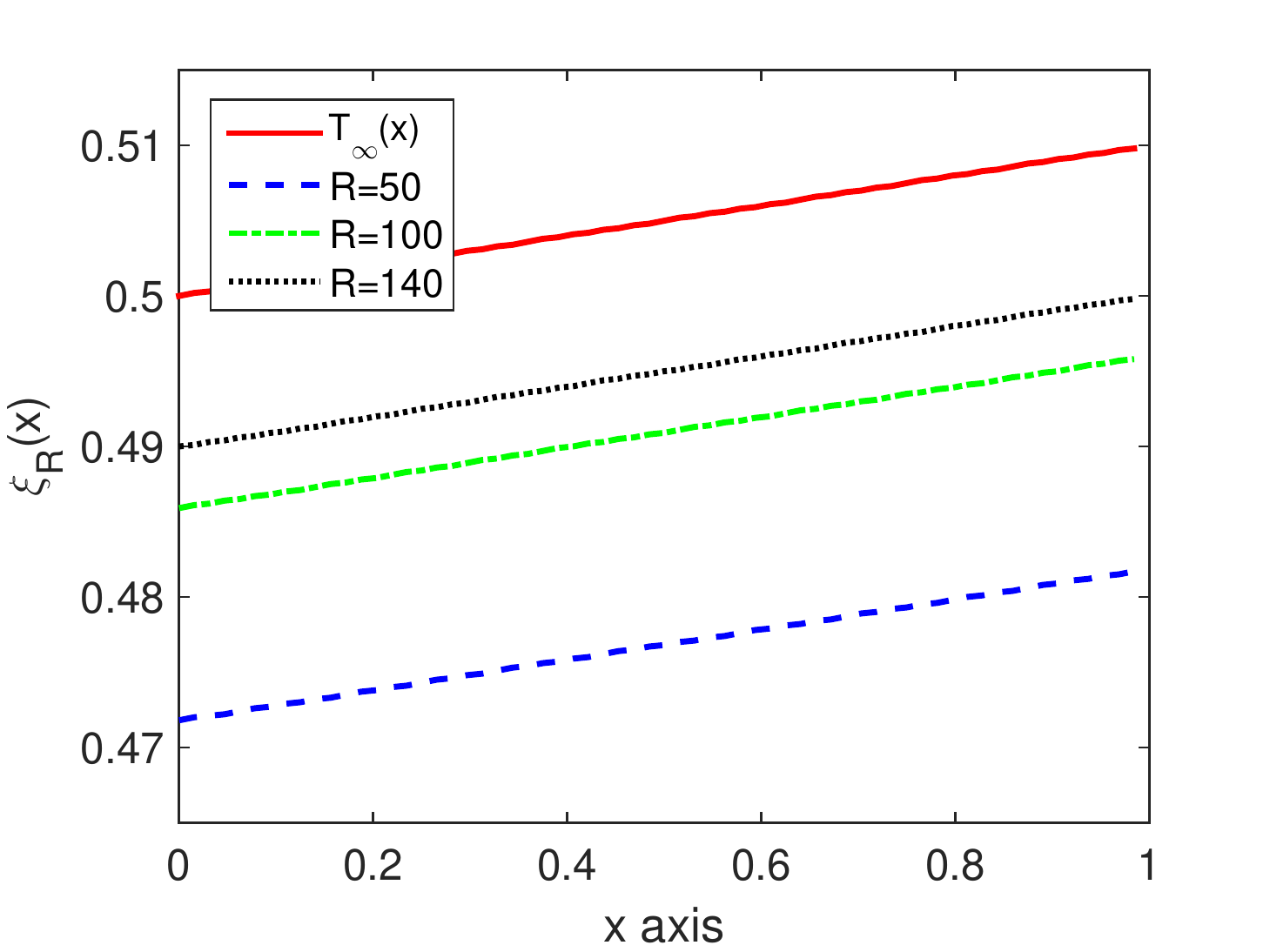}
\caption{\label{fig_BLup} $\xi_R$ for various values of $R$. Case $p=2$ (left) and $p=3$ (right). As $R$ increases, $\xi_R$ converges (pointwise and uniformly) toward the blow-up curve (red line).}
\end{center}
\end{figure}
\end{exmp}

\begin{exmp}\label{exemple2}
In this example, we consider the system \eqref{eqq1} with initial conditions 
\begin{align*}
u_0(x) & = 5(\sin(4\pi x) + 2),\\
u_1(x) & = 5(\sin(4\pi x) -4\pi \cos(4\pi x) + 2).
\end{align*}
With such conditions, we have $u_0=\phi_0>0$, $\alpha=K(u_0)=10>0$ and $\beta=K(u_1)=10>0$, so that all the hypothesis of  Proposition \ref{prop_stability} and Proposition \ref{K_h} are satisfied. Accordingly, we expect the solution to blow up in a finite time. Figure \ref{fig_exmp2_1} and figure \ref{fig_exmp2_2} show the evolution of the numerical solutions in space-time axes for $p=2$ and $p=3$ respectively.
\end{exmp}

\begin{figure}
\begin{center}
\includegraphics[width=10cm,height=8cm]{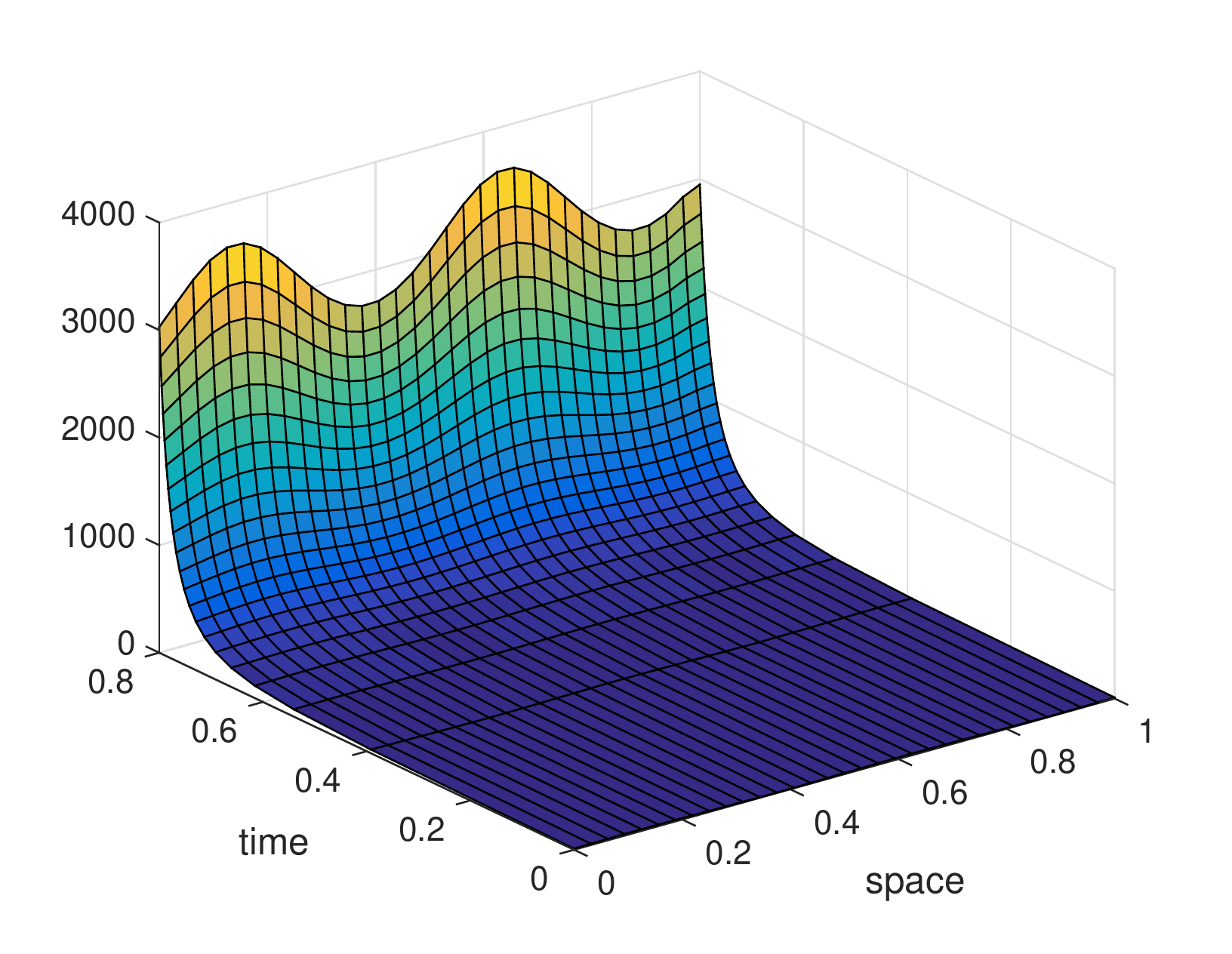}
\caption{\label{fig_exmp2_1} Numerical solution of example \ref{exemple2} with $p = 2$.}
\end{center}
\end{figure}

\begin{figure}
\begin{center}
\includegraphics[width=10cm,height=8cm]{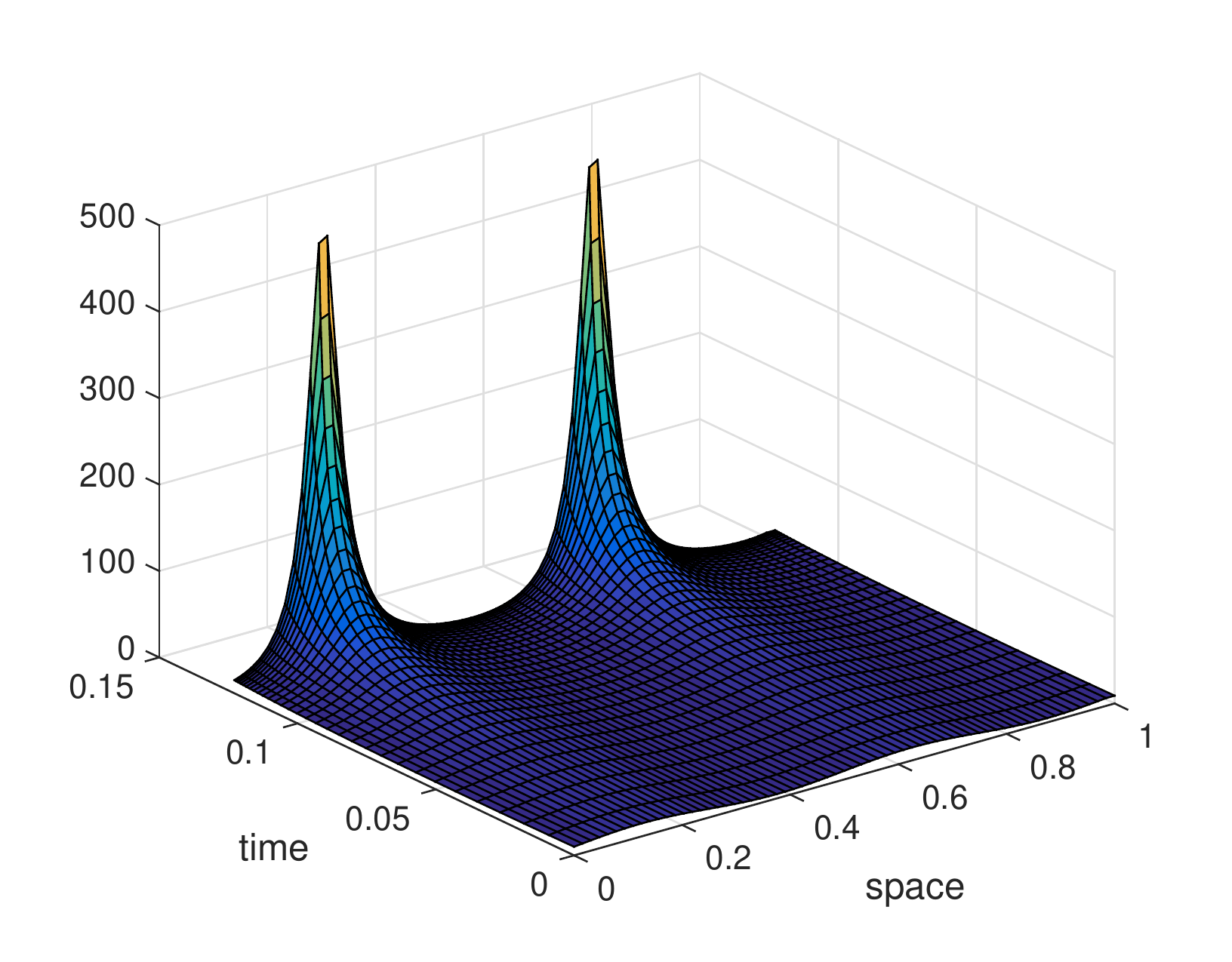}
\caption{\label{fig_exmp2_2} Numerical solution of example \ref{exemple2} with $p = 3$.}
\end{center}
\end{figure}

\begin{exmp}\label{exemple3}
In this example, we compare our DG method to a finite difference (FD) method developed in \cite{Sasaki}. Let us mention that the authors also proved that their FD scheme is convergent, as well as the numerical blow-up time, toward the exact solution. We use a very refined grid mesh for the FD algorithm in order to obtain results as accurate as possible\footnote{The FD grid is 16 times finer than the DG grid.}. The initial conditions used are $u_0(x) = 5(\sin(4\pi x) + 2)$ and $u_1(x) = 20\pi + 5$. Figure \ref{fig_exmp3} shows a comparison between the numerical solutions in various time for $p=2$ and $p=3$. One can notice a very good superposition between the solutions in all recorded times. In table \ref{tab_norm_dg_df}, we report the relative $L^2$ and $L^\infty$ errors between the FD and the DG solutions at the different times. Moreover, we checked the convergence of the numerical blow-up time when the space path $h$ goes to zero. Table \ref{tab1} and figure \ref{fig_time} show the blow-up times of the DG method versus the FD method function of $h$ for $p=3$. Since the blow-up time can not be reached in finite steps (see Definition \ref{def_blow}), we fixed $\|u^n_h\|_\infty \geq 10^{9}$ as a threshold criterion in order to stop the iterations. One can notice that both the DG and the FD algorithms seem to converge toward the same limit, which is $T_\infty\simeq 1.14\,s$ in this case. This confirms the efficiency of our proposed method.
\end{exmp}

\renewcommand{\arraystretch}{1.25}
\begin{table}
\hspace*{-5mm}
\begin{minipage}{7.5cm}
\begin{tabular}{|*{3}{c|}}
\hline
\multicolumn{3}{|c|}{$p = 2$}\\ 
\hline 
Time (s) & \rule[12pt]{0pt}{5pt} $\frac{\|u_h^{FD}-u_h^{DG}\|_2}{\|u_h^{FD}\|_2}$ \rule[-8pt]{0pt}{5pt} & $\frac{\|u_h^{FD}-u_h^{DG}\|_\infty}{\|u_h^{FD}\|_\infty}$ \\ 
\hline 
$0.03$  & $2.16\times 10^{-3}$ & $1.95\times 10^{-3}$ \\ 
\hline 
$0.10$  & $9.15\times 10^{-4}$ & $9.32\times 10^{-4}$ \\ 
\hline 
$0.15$  & $5.97\times 10^{-4}$ & $9.34\times 10^{-4}$ \\ 
\hline 
$0.25$  & $1.11\times 10^{-3}$ & $1.66\times 10^{-3}$ \\ 
\hline 
\end{tabular}
\end{minipage}
\begin{minipage}{7.5cm}
\begin{tabular}{|*{3}{c|}}
\hline
\multicolumn{3}{|c|}{$p = 3$}\\ 
\hline 
Time (s) & \rule[12pt]{0pt}{5pt} $\frac{\|u_h^{FD}-u_h^{DG}\|_2}{\|u_h^{FD}\|_2}$ \rule[-8pt]{0pt}{5pt}& $\frac{\|u_h^{FD}-u_h^{DG}\|_\infty}{\|u_h^{FD}\|_\infty}$ \\ 
\hline 
$0.03$  & $2.58\times 10^{-4}$ & $3.46\times 10^{-4}$ \\ 
\hline 
$0.09$  & $1.25\times 10^{-3}$ & $1.95\times 10^{-3}$ \\ 
\hline 
$0.105$ & $4.05\times 10^{-3}$ & $5.96\times 10^{-3}$ \\ 
\hline 
$0.110$ & $9.98\times 10^{-3}$ & $1.39\times 10^{-2}$ \\ 
\hline 
\end{tabular}
\end{minipage}
\caption{\label{tab_norm_dg_df} Relative errors of the DG solutions $u_h^{DG}$ versus the FD solutions $u_h^{FD}$ for various times.}
\end{table}

\begin{figure}
\begin{center}
\includegraphics[width=6cm, height=6cm]{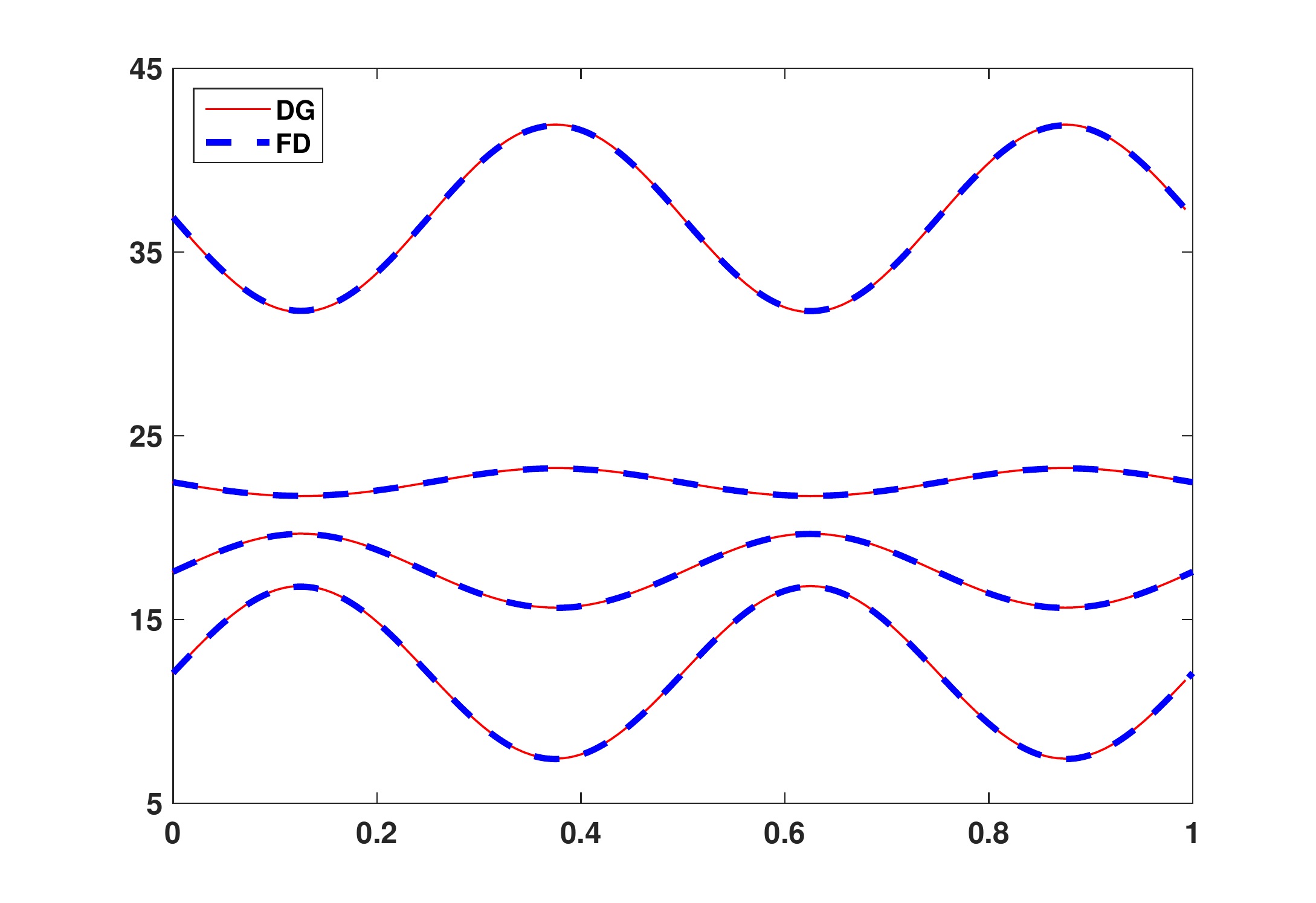}
\includegraphics[width=6cm, height=6cm]{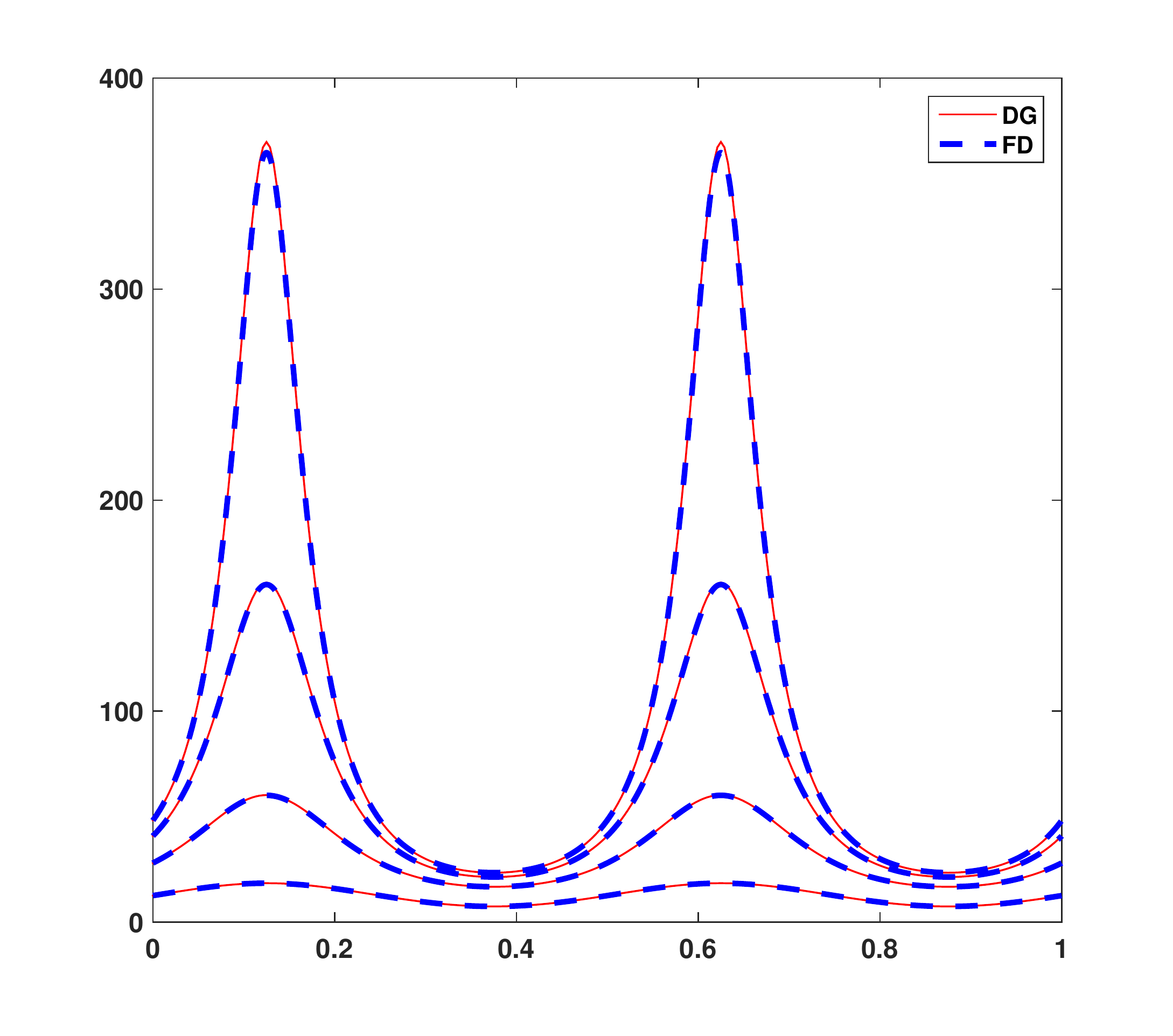}
\caption{\label{fig_exmp3} Comparison between the DG solution (red line) and the FD solution \cite{Sasaki} (blue dash) of example \ref{exemple3} at various times. Case $p=2$ (left) and $p = 3$ (right).}
\end{center}
\end{figure}

\renewcommand{\arraystretch}{1.25}
\begin{table}
\begin{center}
\begin{tabular}{|*{3}{c|}}
\cline{2-3} 
\multicolumn{1}{c|}{} & \multicolumn{2}{c|}{$T(h)$} \\ 
\hline 
$h$ & DG & FD \\ 
\hline 
$1/2^5$  & $1.1671$ & $1.1675$ \\ 
\hline 
$1/2^6$ & $1.1527$ & $1.1538$ \\ 
\hline 
$1/2^7$  & $1.1455$ & $1.1463$ \\ 
\hline 
$1/2^8$  & $1.1419$ & $1.1423$ \\ 
\hline 
$1/2^9$  & $1.1401$ & $1.1403$ \\ 
\hline 
\end{tabular}
\caption{\label{tab1} Blow-up time function of $h$. Case p=3.}
\end{center}
\end{table}

\begin{figure}
\begin{center}
\includegraphics[scale=0.4]{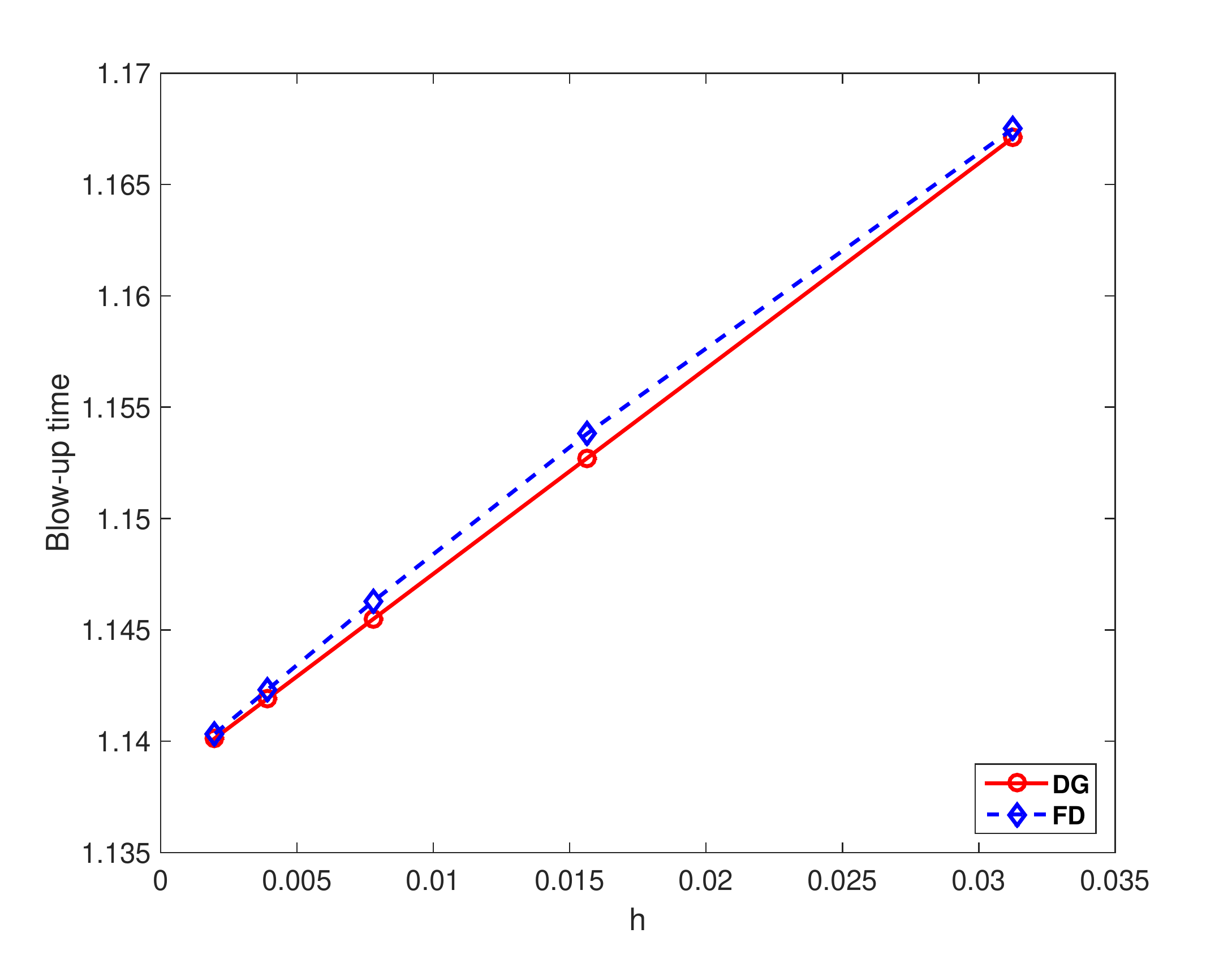}
\caption{\label{fig_time} Comparison between the numerical blow-up time of the DG method (red line) and the FD method (dashed blue line) of example \ref{exemple3}. Case $p = 3$.}
\end{center}
\end{figure}
%
%
\section{Conclusion}
In this paper, we developed a numerical scheme based on discontinuous Galerkin (DG) formulation for the approximation of the nonlinear wave equation in one dimensional space. We showed that the DG scheme is consistent, stable (in the sense that the numerical solution do not blows up in a finite number of iterations, i.e. before the exact blow-up time) and converges toward the exact solution. For the time update, we used an explicit Euler scheme. Since blow-up phenomena can occur, one may not expect a constant time increment\footnote{Otherwise, the numerical solution could be computed beyond the blow-up time leading to erroneous results. Actually, the author in \cite{Cho13} showed that a constant time step remains also applicable if an appropriate stopping criterion is specified.}. Instead, we used a refined time meshing, with time step inversely proportional to the solution's amplitude. Since we are dealing with transport equations, the CFL condition is more constrained in case of DG methods. Indeed, the classical theory of the DG methods shows that $\Delta t$ should be of order $(\Delta x)^{3/2}$ (rather than the standard $\Delta x$) to ensure the stability of the method \cite{Chavent89,Cockburn et Shu}\footnote{While the order $3/2$ has been theoretically established for the linear problems, it has been observed numerically that the order one, i.e. $\Delta t = O(\Delta x)$, is sufficient for the stability of non linear problems \cite{Chavent89}.}. This condition is obviously fulfilled in case the solution blows up. We also proved that the numerical solution blows up in a finite time $T(h)$, and that $T(h)$ converges toward the theoretical blow-up time as $h$ gets smaller. We illustrate the performance of our method throughout several numerical tests and benchmarks.
%
%
\appendix
\section{Matrices properties}\label{app_mat}
Since $(\varphi_j^i)_{1\leq j\leq k+1}$ is a Lagrange polynomial basis of $\IP_k[K_i]$, then for any $x\in K_i$ we have
$$\dis{\sum_{j=1}^{k+1}\varphi_j^i(x) = 1 \ \text{ and }\ \sum_{j=1}^{k+1}(\varphi_j^i)'(x) = 0}.$$
On the other hand, using the transform $\varphi_j^i = \varphi_j \circ (\gamma^i)^{-1}$ where $\varphi_j$ is the $j$\up{th} Lagrange polynomial over $[-1,1]$ and $\gamma^i : [-1,1] \rightarrow K_i$, $x \mapsto \frac{1}{2}(h_i\,x + x_{i+\frac{1}{2}} + x_{i-\frac{1}{2}})$, one can easily show $M^i = h_i M$, $R^i = R$, $A^i=A$, $B^i=B$, $C^i=C$ and $D^i=D$ for all $i$ with
\begin{equation*}
M_{j\ell} = \dfrac{1}{2}\int_{-1}^1 \varphi_j\ \varphi_\ell\ dx,\quad R_{j\ell} = \int_{-1}^1  \varphi_j\ \varphi'_\ell\ dx,
\end{equation*}
\begin{equation*}
A_{j\ell} = \varphi_j(-1)\ \varphi_\ell(-1), \quad B_{j\ell} = \varphi_j(-1)\ \varphi_\ell(1),
\end{equation*}
and
\begin{equation*}
C_{j\ell} = \varphi_j(1)\ \varphi_\ell(-1), \quad D_{j\ell} = \varphi_j(1)\ \varphi_\ell(1).
\end{equation*}
It follows $\forall\ 1\leq j \leq k+1$,
\begin{equation*}
 \sum_{\ell=1}^{k+1}R_{j\ell} = \sum_{\ell=1}^{k+1} \int_{-1}^{1}\varphi_j(x) \varphi_\ell'(x) dx = \int_{-1}^{1}\varphi_j(x) \left(\sum_{\ell=1}^{k+1} \varphi_\ell'(x)\right) dx = 0,
\end{equation*}
\begin{equation*}
\sum_{\ell=1}^{k+1} A_{j\ell} = \sum_{\ell=1}^{k+1} \varphi_j(-1)\varphi_\ell(-1) = \varphi_j(-1) \sum_{\ell=1}^{k+1} \varphi_\ell(-1) = \varphi_j(-1),
\end{equation*}
\begin{equation*}
\sum_{\ell=1}^{k+1} B_{j\ell} = \sum_{\ell=1}^{k+1} \varphi_j(-1)\varphi_\ell(1) = \varphi_j(-1) \sum_{\ell=1}^{k+1} \varphi_\ell(1) = \varphi_j(-1).
\end{equation*}
Therefore, $\forall\ 1\leq j \leq k+1$,
\begin{equation}\label{sum_jl}
\sum_{\ell=1}^{k+1} (R_{j\ell} + A_{j\ell} - B_{j\ell}) = 0.
\end{equation}
Now, we have $$E = h_i(M^i)^{-1}(R^i+A^i) = M^{-1}(R+A) \quad \text{and }  \ F=-h_i(M^i)^{-1}B^i = -M^{-1}B$$ are constant matrices, and $\forall\ 1\leq j \leq k+1$
\begin{align}\label{som_E+F}
\sum_{\ell=1}^{k+1} E_{j\ell} + F_{j\ell} & = \sum_{\ell=1}^{k+1} M^{-1}(R+A-B)_{j\ell} \nonumber \\
& = \sum_{\ell=1}^{k+1} \sum_{s=1}^{k+1} (M^{-1})_{js}(R+A-B)_{s\ell}\nonumber \\
& = \sum_{s=1}^{k+1}(M^{-1})_{js} \sum_{\ell=1}^{k+1}(R_{s\ell}+A_{s\ell}-B_{s\ell})\nonumber \\
& = 0
\end{align}
where the last equality follows from \eqref{sum_jl}.
%
%
\section{Proof of Lemma \ref{lemma1}}\label{app_proof_lem}
We rewrite $M_n$ as $M_n = I - \widetilde{M}_n$ with
\begin{equation*}
\widetilde{M}_n = \begin{pmatrix}
\tilde{\mathcal{M}}_A^1 & 0 & \dots & 0 & \mathcal{M}_B^1 \\
\mathcal{M}_B^2 & \tilde{\mathcal{M}}_A^2 & 0 & \dots & 0 \\
0 & \mathcal{M}_B^3 & \tilde{\mathcal{M}}_A^3 & \ddots & \vdots\\
\vdots & \ddots & \ddots & \ddots & 0 \\
0 & \dots & 0 & \mathcal{M}_B^I & \tilde{\mathcal{M}}_A^I
\end{pmatrix}
\end{equation*}
with $$\tilde{\mathcal{M}}_A^i = \Delta t^n (M^i)^{-1}(R^i+A^i) = \dfrac{\Delta t^n}{h_i}E$$ and $$\mathcal{M}_B^i = -\Delta t^n (M^i)^{-1}B^i = \dfrac{\Delta t^n}{h_i}F.$$
Let $\left[\widetilde{M}_n\right]^i$ be the $i$\up{th} block-row of $\widetilde{M}_n$ and denote $x^+ = \max(x,0)$ and $x^- = \min(x,0)$ for any $x\in \IR$. Then, using \eqref{som_E+F}, we obtain for any $1\leq j \leq k+1$
\begin{align*}
\sum_{\ell=1}^{k+1}\left|\left[\widetilde{M}_n\right]^i_{j\ell}\right| & = \dfrac{\Delta t^n}{h_i} \sum_{\ell=1}^{k+1} \left|E_{j\ell} + F_{j\ell}\right| \\
& = \dfrac{\Delta t^n}{h_i} \left(\sum_{\ell=1}^{k+1} \left(E_{j\ell}^+ + F_{j\ell}^+\right) - \sum_{\ell=1}^{k+1} \left(E_{j\ell}^- + F_{j\ell}^-\right)\right) \\
& = 2\dfrac{\Delta t^n}{h_i} \sum_{\ell=1}^{k+1} \left(E_{j\ell}^+ + F_{j\ell}^+\right).
\end{align*}
It follows
\begin{align*}
\|\widetilde{M}_n\|_\infty & = \max_{1\leq i \leq I}\|\left[\widetilde{M}_n\right]^i\|_\infty \\
& = \max_{1\leq i \leq I}\left(\max_{1\leq j \leq k+1}\sum_{\ell=1}^{k+1}\left|\left[\widetilde{M}_n\right]^i_{j\ell}\right|\right)\\
& = \max_{1\leq i \leq I}\max_{1\leq j \leq k+1} 2\dfrac{\Delta t^n}{h_i} \sum_{\ell=1}^{k+1} \left(E_{j\ell}^+ + F_{j\ell}^+\right).
\end{align*}
In particular, if $h_i=h$ for all $i$, and if we denote $\rho = \sum_{\ell=1}^{k+1} \left(E_{j\ell}^+ + F_{j\ell}^+\right)$, then we obtain
\begin{align*}
\|M_n\|_\infty \leq \|I\|_\infty + \|\widetilde{M}_n\|_\infty = 1+2\rho\dfrac{\Delta t^n}{h}
\end{align*}
The same reasoning can be applied to the matrix $N_n$.
%
%
\section{Proof of Lemma \ref{lem_Kh>Khp}}\label{app_lem_lambda}
Let $1\leq i \leq I$, then using the classical inequality $\left|\sum_{j=1}^m a_j\right|^p \leq m^{p-1} \sum_{j=1}^m |a_j|^p$ we obtain
\begin{align*}
\left|\int_{K_i} u^{i,n+1}_h(x)\,dx\right|^p & = \left|\int_{K_i} \sum_{j=1}^{k+1} u_j^{i,n+1}\varphi^i_j(x)\,dx\right|^p \\
& \leq (k+1)^{p-1}\sum_{j=1}^{k+1} \left|u_j^{i,n+1}\right|^p \left|\int_{K_i}\varphi^i_j(x)\,dx\right|^p \\
& = (k+1)^{p-1}\sum_{j=1}^{k+1} \left|u_j^{i,n+1}\right|^p \left|\dfrac{h}{2}\int_{-1}^1\varphi_j(x)\,dx\right|^p.
\end{align*}
Denote $$\lambda = \left(\frac{k+1}{2}\max_{1\leq j \leq k+1}\left|\int_{-1}^1\varphi_j(x)\,dx\right|\right)^{1-p},$$ then we have
\begin{align*}
\left(K_h(u^{n+1}_h)\right)^p & = \dfrac{1}{(b-a)^p}\left(\sum_{i=1}^I\int_{K_i} u^{i,n+1}_h(x)\,dx\right)^p \\
& \leq \dfrac{I^{p-1}}{(b-a)^p}\sum_{i=1}^I\left|\int_{K_i} u^{i,n+1}_h(x)\,dx\right|^p\\
& \leq \dfrac{(Ih)^{p-1}}{\lambda (b-a)^p} \sum_{i=1}^I \sum_{j=1}^{k+1} \left|u_j^{i,n+1}\right|^p \left|\dfrac{h}{2}\int_{-1}^1\varphi_j(x)\,dx\right| \\
\end{align*}
Now, if $0\leq k\leq 7$ then the integrals $\int_{-1}^1\varphi_j(x)\,dx $ are positives for all $1\leq j \leq k+1$ (see table \ref{tab_poly}). It follows
\begin{align*}
\left(K_h(u^{n+1}_h)\right)^p & \leq \dfrac{1}{\lambda} \sum_{j=1}^{k+1} \left|u_j^{i,n+1}\right|^p \int_{K_i}\varphi^i_j(x)\,dx \\
& = \dfrac{1}{\lambda} K_h\big{(}\p(|u^{n+1}_h|^p)\big{)}.
\end{align*}
\begin{table}
\begin{center}
\begin{tabular}{|c|c|c|c|c|c|c|c|c|}
\hline 
\backslashbox{$k$}{$\alpha_{j}$} & $\alpha_{1}$ & $\alpha_{2}$ & $\alpha_{3}$ & $\alpha_{4}$ & $\alpha_{5}$ & $\alpha_{6}$ & $\alpha_{7}$ & $\alpha_{8}$ \\ 
\hline 
$0$ & 2 &   &  &  &  &  &  &  \\ 
\hline 
$1$ & 1 & 1 &  &  &  &  &  &  \\ 
\hline 
$2$ & $\frac{1}{3}$ & $\frac{4}{3}$ & $\frac{1}{3}$ &  &  &  &  &  \\ 
\hline 
$3$ & $\frac{1}{4}$ & $\frac{3}{4}$ & $\frac{3}{4}$ & $\frac{1}{4}$ &  &  &  &  \\ 
\hline 
$4$ & $\frac{7}{45}$ & $\frac{32}{45}$ & $\frac{12}{45}$ & $\frac{32}{45}$ & $\frac{7}{45}$ &  &  &  \\ 
\hline 
$5$ & $\frac{19}{144}$ & $\frac{75}{144}$ & $\frac{50}{144}$ & $\frac{50}{144}$ & $\frac{75}{144}$ & $\frac{19}{144}$ &  &  \\ 
\hline 
$6$ & $\frac{41}{420}$ & $\frac{216}{420}$ & $\frac{27}{420}$ & $\frac{272}{420}$ & $\frac{27}{420}$ & $\frac{216}{420}$ & $\frac{41}{420}$ &  \\ 
\hline 
$7$ & $\frac{751}{8640}$ & $\frac{3577}{8640}$ & $\frac{1323}{8640}$ & $\frac{2989}{8640}$ & $\frac{2989}{8640}$ & $\frac{1323}{8640}$ & $\frac{3577}{8640}$ & $\frac{751}{8640}$ \\ 
\hline 
\end{tabular}
\caption{\label{tab_poly}Values of $\alpha_{j} := \int_{-1}^1 \varphi_{j}(x)dx$ where $\varphi_{j}$ is the $j$\up{th} Lagrange polynomial of degree $k$ over $[-1,1]$.}
\end{center}
\end{table}

\begin{thebibliography}{}

\bibitem{Abia} Abia, L.M., L\'opez-Marcos, J.C. and Mart\'inez, J. On the blow-up time convergence of semidiscretizations of reaction-diffusion equations, Appl. Numer. Math. {\bf 26}(4): 399-414, 1998.
          
\bibitem{Ab01} Abia, L.M., L\'opez-Marcos, J.C. and Mart\'inez, The Euler method in the numerical integration of reaction-diffusion problems with blow-up, Appl. Numer. Math. {\bf 38}: 287-313, 2001.

\bibitem{AM} Antonini, C. and Merle, F. Optimal bounds on positive blow-up solutions for a semilinear wave equation. Int. Math. Res. Notices, {\bf 21}: 1141-1167, 2001.

\bibitem{Az} Azaiez, A., Masmoudi, N. and Zaag, H. Blow-up rate for a semilinear wave equation with exponential nonlinearity in one space dimension, Math. Soc. Lect. Note Ser. {\bf 450}: 1-32, 2019.
            
\bibitem{Berg} Berger, M. and Kohn, R.V. A rescaling algorithm for the numerical calculation of blowing-up solutions. Com. Pure. Appl. Math., {\bf 41}(6): 841-863, 1988.

\bibitem{Bra04} Br\"andle, C., Groisman, P. and Rossi, J.D. Fully discrete adaptive methods for a blow-up problem. Math. Models. Meth. Appl. Sci., {\bf 14}(10): 1425-1450, 2004.

\bibitem{CF85} Caffarelli, L.A. and Friedman, A. Differentiability of the Blow-up Curve for one Dimensional Nonlinear Wave Equations. Arch. Rational Mech. Anal., {\bf 91}(1):83-98,1985.

\bibitem{Caffarelli and Friedman} Caffarelli, L.A. and Friedman, A. The blow-up boundary for nonlinear wave equations. Trans. Am. Math. Soc. {\bf 297}(1): 223-241, 1986.

\bibitem{Chavent89} Chavent, G. and Cockburn, B. The local projection $P0-P1$-discontinuous-Galerkin finite element method for scalar conservation laws. Mod. Math. Anal. Num. {\bf 23}(4): 565-592, 1989.

\bibitem{Chen} Chen, Y.G. Asymptotic behaviours of blowing-up solutions for finite difference analogue of $u_t = u_{xx} + u^{1+\alpha}$. J. Fac. Sci. Univ. Tokyo {\bf 33}: 541-574, 1986.

\bibitem{Cho et al} Cho, C.H., Hamada, S. and Okamoto, H. On the finite difference approximation for a parabolic blow-up problems, Japan J. Indust. Appl. Math. {\bf 24}: 105-134, 2007.

\bibitem{Cho} Cho, C.H. A finite difference scheme for blow-up solutions of nonlinear wave equations, Numer. Math. Theor. Meth. Appl. {\bf 3}(4): 475-498, 2010.

\bibitem{Cho13} Cho, C.H. On the computation of the numerical blow-up time, Japan J. Indust. Appl. Math. {\bf 30}: 331-349, 2013.

\bibitem{Cho16} Cho, C.H. Numerical detection of blow-up: a new sufficient
condition for blow-up, Japan J. Indust. Appl. Math. {\bf 33}(1): 81-98, 2016.

\bibitem{Cho17} Cho, C.H. On the computation for blow-up solutions of the nonlinear wave equation, Numer. Math. {\bf 138}: 537-556, 2018.

\bibitem{Ciarlet} Ciarlet, P. The Finite Element Method for Elliptic Problem, North Holland, 1975.

\bibitem{Cockburn} Cockburn, B. Discontinuous Galerkin methods for convection-dominated problems, in High-Order Methods for Computational Physics, T.J. Barth and H. Deconinck, editors, Lecture Notes in Computational Science and Engineering, Springer, {\bf 9}: 69-224, 1999.

\bibitem{Cockburn et Shu} Cockburn, B., Karniadakis, G.E. and Shu, C.-W. Discontinuous Galerkin methods: Theory, Computation and Application, Springer-Verlag, Berlin Heidelberg, 2000.

\bibitem{Cock_shu_89} Cockburn, and Shu, C.-W. TVB Runge-Kutta local projection discontinuous Galerkin finite element method for conservation laws II: general framework, Math. Comp., {\bf 52}: 411-435, 1989.

\bibitem{Cons85} Constantin, P., Lax, P.D. and Majda, A.J. A simple one-dimensional model for the three dimensional vorticity equation, Comm. Pure Appl. Math., {\bf 38}: 715-724, 1985.

\bibitem{CZ12} C{\^o}te, R. and Zaag, H. Construction of a multisoliton blowup solution to the semilinear wave equation in one space dimension, Comm. Pure Appl. Math., {\bf 66}(10): 1541-1581, 2013.

\bibitem{Ern} Ern, A. and Guermond, J.L. Theory and Practice of Finite Elements, Springer-Verlag, New York, 2004.

\bibitem{Evans} Evans, L.C. Partial Differential Equations, Graduate Studies in Math. {\bf 19}. American Mathematical Society, Providence ,1998.

\bibitem{Gla73} Glassey, R.T. Blow-up Theorems for nonlinear wave equations. Math. Z. {\bf 132}: 183-203, 1973.

\bibitem{Glassey} Glassey, R.T. Finite-time blow-up for solutions of nonlinear wave equations. Math. Z. {\bf 177}: 323-340, 1981.

\bibitem{Groisman06} Groisman, P. Totally Discrete Explicit and Semi-implicit Euler Methods for a Blow-up Problem in Several Space Dimensions. Computing, {\bf 76}: 325-352, 2006.

\bibitem{Grot} Grote, M.J., Schneebeli, A. and Sch\"otzau, D. Discontinuous Galerkin Finite Element Method for the Wave Equation, SIAM. J. Numer. Anal. {\bf 44}(6): 2408-2431, 2006.

\bibitem{Guo15} Guo, L. and Yang, Y. Positivity preserving high-order local discontinuous Galerkin method for parabolic equations with blow-up solutions, J. Comput. Phys. {\bf 289}: 181-195, 2015.

\bibitem{Hest} Hesthaven, J. and Warburton, T. Nodal Discontinuous Galerkin Methods : Algorithms, Analysis, and Applications. Texts in Applied Mathematics, Springer, 2008.
            
\bibitem{Holm18} Holm, B. and Wihler, T.P. Continuous and discontinuous Galerkin time stepping methods for nonlinear initial value problems with application to finite time blow-up, Numer. Math. {\bf 138}(3): 767-799, 2018.

\bibitem{Hu} Hu, Q.F., Hussaini, M.Y. and Rasetarinera, P. An Analysis of the Discontinuous Galerkin Method for Wave Propagation Problems, J. Comput. Phys. {\bf 151}(2): 921-946, 1999.

\bibitem{John} John, F. Blow-up of solutions of nonlinear wave equations in three space dimensions. Manuscripta Math., {\bf 28}: 235-268, 1979.

\bibitem{livine} Levine, H.A. Instability and nonexistence of global solutions to nonlinear wave equations of the form $Pu_{tt} = -Au + F(u)$. Trans. Amer. Math. Soc., {\bf 192}: 1-21, 1974.

\bibitem{Matsuo} Matsuo, T. New conservative schemes with discrete variational derivatives for nonlinear wave equations. J. Comput. Appl. Math., {\bf 203}: 32-56, 2007.

\bibitem{MZ} Merle, F. and Zaag, H. Determination of the blow-up rate for the semilinear wave equation. Amer. J. Math., {\bf 125}(5): 1147-1164, 2003.

\bibitem{MR2147056} Merle, F. and Zaag, H. On growth rate near the blowup surface for semilinear wave equations. Int. Math. Res. Not., {\bf 19}: 1127-1155, 2005.

\bibitem{MR2362418} Merle, F. and Zaag, H. Existence and universality of the blow-up profile for the semilinear wave equation in one space dimension. J. Funct. Anal., {\bf 253}(1): 43-121, 2007.

\bibitem{MZ12} Merle, F. and Zaag, H. Existence and classification of characteristic points at blow-up for a semilinear wave equation in one space dimension. Amer. J. Math., {\bf 134}(3): 581-648, 2012.

\bibitem{Nakagawa} Nakagawa, T. Blowing up of a finite difference solution to $u_t = u_{xx} + u^2$. Appl. Math. Optim., {\bf 2}: 337-350, 1976.
             
\bibitem{Ngu17} Nguyen, V.T. Numerical analysis of the rescaling method for parabolic problems with blow-up in finite time. Physica D, {\bf 339}: 49-65, 2017.


\bibitem{Sasaki} Saito, N. and Sasaki, T. Blow-up of finite-difference solutions to nonlinear wave equations. J. Math. Sci. Univ. Tokyo {\bf 23}(1): 349-380, 2016.
\end{thebibliography}
\end{document}